 \def\NZQ{\mathbb}               
 \def\NN{{\NZQ N}}
 \def\G{{\mathcal G}}
 \def\opn#1#2{\def#1{\operatorname{#2}}} 
 \opn\chara{char} \opn\length{\ell} \opn\pd{pd} \opn\rk{rk}
 \opn\projdim{proj\,dim} \opn\injdim{inj\,dim} \opn\rank{rank}
 \opn\depth{depth} \opn\grade{grade} \opn\height{height}
 \opn\embdim{emb\,dim} \opn\codim{codim}
 \opn\Tr{Tr} \opn\bigrank{big\,rank}
 \opn\superheight{superheight}\opn\lcm{lcm}
 \opn\trdeg{tr\,deg}
 \opn\reg{reg} \opn\lreg{lreg} \opn\ini{in} \opn\lpd{lpd}
 \opn\size{size} \opn\sdepth{sdepth}
 \opn\link{link}\opn\fdepth{fdepth}\opn\lex{lex}
 \opn\div{div} \opn\Div{Div} \opn\cl{cl} \opn\Cl{Cl}
 \opn\Spec{Spec} \opn\Supp{Supp} \opn\supp{supp} \opn\Sing{Sing}
 \opn\Ass{Ass} \opn\Min{Min}\opn\Mon{Mon}
 \opn\Ann{Ann} \opn\Rad{Rad} \opn\Soc{Soc}
 \opn\Im{Im} \opn\Ker{Ker} \opn\Coker{Coker} \opn\Am{Am}
 \opn\Hom{Hom} \opn\Tor{Tor} \opn\Ext{Ext} \opn\End{End}
 \opn\Aut{Aut} \opn\id{id}
 \opn\nat{nat}
 \opn\pff{pf}
 \opn\Pf{Pf} \opn\GL{GL} \opn\SL{SL} \opn\mod{mod} \opn\ord{ord}
 \opn\Gin{Gin} \opn\Hilb{Hilb}\opn\sort{sort}
 \opn\aff{aff} \opn
\opn\relint{relint} \opn\st{st}
 \opn\lk{lk} \opn\cn{cn} \opn\core{core} \opn\vol{vol}  \opn\inp{inp} \opn\nilpot{nilpot}
 \opn\link{link} \opn\star{star}\opn\lex{lex}\opn\set{set}
 \opn\width{wd}
 \opn\ecart{ecart}
 \opn\gr{gr}
 \def\pot#1#2{#1[\kern-0.28ex[#2]\kern-0.28ex]}
 \opn\dirlim{\underrightarrow{\lim}}
 \opn\inivlim{\underleftarrow{\lim}}
 \let\to=\rightarrow
 \def\Implies{\ifmmode\Longrightarrow \else
         \unskip${}\Longrightarrow{}$\ignorespaces\fi}
 \def\implies{\ifmmode\Rightarrow \else
         \unskip${}\Rightarrow{}$\ignorespaces\fi}
 \def\iff{\ifmmode\Longleftrightarrow \else
         \unskip${}\Longleftrightarrow{}$\ignorespaces\fi}
 \def\Soc{{\mathbf Soc}}
 \def\opn#1#2{\def#1{\operatorname{#2}}} 
 \opn\chara{char} \opn\length{\ell} \opn\pd{pd} \opn\rk{rk}
 \opn\projdim{proj\,dim} \opn\injdim{inj\,dim} \opn\rank{rank}
 \opn\depth{depth} \opn\grade{grade} \opn\height{height}
 \opn\bigheight{bigheight}
 \opn\embdim{emb\,dim} \opn\codim{codim}
 \opn\superheight{superheight}\opn\lcm{lcm}
 \opn\trdeg{tr\,deg}
 \opn\reg{reg} \opn\lreg{lreg} \opn\ini{in} \opn\lpd{lpd}
 \opn\size{size} \opn\sdepth{sdepth}
 \opn\link{link}\opn\fdepth{fdepth}\opn\lex{lex}
 \opn\type{type}
 \opn\gap{gap}
 \opn\arithdeg{arith-deg}
 \opn\Deg{Deg}
 \opn\sat{sat}
 \opn\mat{mat}
 \opn\Mat{Mat}
 \opn\div{div} \opn\Div{Div} \opn\cl{cl} \opn\Cl{Cl}
 \opn\Spec{Spec} \opn\Supp{Supp} \opn\supp{supp} \opn\Sing{Sing}
 \opn\Ass{Ass} \opn\Min{Min}\opn\Mon{Mon} \opn\Max{Max}
 \opn\Ann{Ann} \opn\Rad{Rad} \opn\Soc{Soc}
 \opn\Im{Im} \opn\Ker{Ker} \opn\Coker{Coker} \opn\Am{Am}
 \opn\Hom{Hom} \opn\Tor{Tor} \opn\Ext{Ext} \opn\End{End}
 \opn\Aut{Aut} \opn\id{id}
 \opn\nat{nat}
 \opn\pff{pf}
 \opn\Pf{Pf} \opn\GL{GL} \opn\SL{SL} \opn\mod{mod} \opn\ord{ord}
 \opn\Gin{Gin} \opn\Hilb{Hilb}\opn\sort{sort}
 \opn\PF{PF}\opn\Ap{Ap}
 \opn\mult{mult}
 \opn\bight{bight}
 \opn\aff{aff}
 \opn\relint{relint} \opn\st{st}
 \opn\lk{lk} \opn\cn{cn} \opn\core{core} \opn\vol{vol}  \opn\inp{inp} \opn\nilpot{nilpot}
 \opn\link{link} \opn\star{star}\opn\lex{lex}\opn\set{set}
 \opn\width{wd}
 \opn\Fr{F}
 \opn\QF{QF}
 \opn\G{G}
 \opn\type{type}\opn\res{res}
 \opn\conv{conv}
 \opn\Shad{Shad}
 \opn\gr{gr}
 \def\pot#1#2{#1[\kern-0.28ex[#2]\kern-0.28ex]}
 \opn\dirlim{\underrightarrow{\lim}}
 \opn\inivlim{\underleftarrow{\lim}}
 \let\to=\rightarrow
 \def\Implies{\ifmmode\Longrightarrow \else
         \unskip${}\Longrightarrow{}$\ignorespaces\fi}
 \def\implies{\ifmmode\Rightarrow \else
         \unskip${}\Rightarrow{}$\ignorespaces\fi}
 \def\iff{\ifmmode\Longleftrightarrow \else
         \unskip${}\Longleftrightarrow{}$\ignorespaces\fi}
\theoremstyle{plain}
\newtheorem{theorem}{Theorem}[section]
\newtheorem{thm}[theorem]{Theorem}
\newtheorem{prop}[theorem]{Proposition}
\newtheorem{cor}[theorem]{Corollary}
\newtheorem{lem}[theorem]{Lemma}
\newtheorem{claim}{Claim}
\theoremstyle{definition}
\newtheorem{ex}[theorem]{Example}
\newtheorem{quest}[theorem]{Question}
\newtheorem{rem}[theorem]{Remark}
\newtheorem{notation}[theorem]{Notation}
\newtheorem{fact}[theorem]{Fact}
\newtheorem*{acknowledgments}{Acknowledgments}
\newtheorem*{obs}{Observation}
 \let\epsilon\varepsilon
 \let\kappa=\varkappa
 \def\qed{\ifhmode\textqed\fi
       \ifmmode\ifinner\quad\qedsymbol\else\dispqed\fi\fi}
 \def\textqed{\unskip\nobreak\penalty50
        \hskip2em\hbox{}\nobreak\hfil\qedsymbol
        \parfillskip=0pt \finalhyphendemerits=0}
 \def\dispqed{\rlap{\qquad\qedsymbol}}
 \opn\dis{dis}
 \def\pnt{{\raise0.5mm\hbox{\large\bf.}}}
 \opn\Lex{Lex}
\newcommand{\rme}{\mathrm{e}}
\newcommand{\rmr}{\mathrm{r}}
\newcommand{\rmH}{\mathrm{H}}
\newcommand{\rmQ}{\mathrm{Q}}
\newcommand{\calH}{\mathcal{H}}
\newcommand{\calP}{\mathcal{P}}
\newcommand{\fkc}{\mathfrak{c}}
\newcommand{\fkm}{\mathfrak{m}}
\newcommand{\fkn}{\mathfrak{n}}
\newcommand{\fkp}{\mathfrak{p}}
\newcommand{\fkq}{\mathfrak{q}}
\newcommand{\fkM}{\mathfrak{M}}
\def\ol{\overline}
\def\tr{\mathrm{tr}}
\def\Tr{\mathrm{Tr}}
\def\regTr{\mathrm{reg}\text{-}\mathrm{Tr}}
\def\Rad{\mathrm{Rad}}
\title{When are trace ideals finite?}
\author{Shinya Kumashiro}
\address{Shinya Kumashiro: National Institute of Technology (KOSEN), Oyama College
771 Nakakuki, Oyama, Tochigi, 323-0806, Japan}
\email{skumashiro@oyama-ct.ac.jp}
\thanks{2020 {\em Mathematics Subject Classification.} 13C05, 13C13, 13H10, 13B22, 13J10}
\thanks{{\em Key words and phrases.} trace ideal, Noetherian ring, Cohen-Macaulay ring, complete ring, canonical module, minimal multiplicity}
\thanks{The author was supported by JSPS KAKENHI Grant Number 21K13766 and by Grant for Basic Science Research Projects from the Sumitomo Foundation (Grant number 2200259).}
\begin{document}

\begin{abstract}
In this paper, we study Noetherian local rings $R$ having a finite number of trace ideals. We proved that such rings are of dimension at most two. Furthermore, if the integral closure of $R/H$, where $H$ is the zeroth local cohomology, is equi-dimensional, then the dimension of $R$ is at most one. In the one-dimensional case, we can reduce to the situation that rings are Cohen-Macaulay. Then, we give a necessary condition to have a finite number of  trace ideals in terms of the value set obtained by the canonical module. We also gave the correspondence between trace ideals of $R$ and those of the endomorphism algebra of the maximal ideal of $R$ when $R$ has minimal multiplicity. 
\end{abstract}

\maketitle



\section{Introduction}\label{section1}

Let $R$ be a commutative Noetherian ring, and let $M$ be a finitely generated $R$-module. Then
\begin{align*} 
\mathrm{tr}_R(M)=\sum_{f\in \Hom_R(M, R)} \Im f 
\end{align*}
is called the {\it trace ideal} of $M$. An ideal $I$ in $R$ is called a {\it trace ideal} if $I=\mathrm{tr}_R(M)$ for some $R$-module $M$. 
The notion of trace ideals is useful to study the structure of reflexive modules. Suppose that $M$ is a faithful reflexive $R$-module. Then, the endomorphism algebra $T(M)=\Hom_R(\tr_R(M), \tr_R(M))$ is the center of $\Hom_R(M, M)$ (\cite[Introduction]{Lin}). It is also known that $M$ can be regarded as a $T(M)$-module  (\cite[(7.2) Proposition]{Ba}, \cite[Proposition 2.4]{IK}). 
Based on these results, 
Isobe and Kumashiro, and independently Dao, provided a certain direct-sum decomposition for reflexive modules over (one-dimensional) Arf local rings (\cite[Theorem A]{D} and \cite[Theorem 1.1]{IK}). 

The notion of trace ideals is not only useful for the study of a given module, but is also deeply related to the property of rings. Indeed, Kobayashi and Takahashi characterized Noetherian local rings such that all ideals are isomorphic to some trace ideals (\cite[Corollary 1.4]{KT}). Herzog and Rahimbeigi proved that for one-dimensional analytically irreducible Gorenstein local $K$-algebras, where $K$ is an infinite field, the finiteness of trace ideals is equivalent to the finiteness of indecomposable maximal Cohen-Macaulay modules up to isomorphism (\cite[Corollary 2.16]{HM}). 

In addition, for a Cohen-Macaulay local ring having the canonical module, it is also known that the trace ideal of the canonical module defines the non-Gorenstein locus of the ring. With this reason the trace ideal of the canonical module is utilized in the study of non-Gorenstein Cohen-Macaulay rings, as {\it nearly Gorenstein rings} (\cite{DKT, HHS}).

Among these subjects, in this paper, we study the following finiteness problem on trace ideals, which is already posed by several papers \cite[Question 7.16(1)]{DMS}, \cite[Question 3.7]{F}, and \cite{HM}. 

\begin{quest} \label{question}
When do Noetherian local rings have a finite number of trace ideals?
\end{quest}

Question \ref{question} may be arised from the study of classification of isomorphism classes of maximal Cohen-Macaulay modules (see \cite[Question 3.7]{F}).
Noting that distinct trace ideals are non-isomorphic (this is immediately observed from Fact \ref{remtrace}(a)), to see how many non-isomorphic trace ideals there are, we only need to know what is the set of trace ideals. We should mention a previous study \cite{GIK2} on the set of trace ideals.

With Question \ref{question}, this paper proceeds as follows. Let $R$ be a Noetherian local ring. We denote by $H$ the zeroth local cohomology of $R$. $\ol{*}$ denotes the integral closure of $*$. 

We first obtain that the finiteness of trace ideals forces the dimension of rings to be at most two. If $\ol{R/H}$ is equi-dimensional, that is, all maximal ideals of $\ol{R/H}$ have the same height, then the dimension must be at most one (Theorem \ref{thm1}). For the one-dimensional case, we can reduce to the situation that rings are Cohen-Macaulay by passing to the residue ring $R/H$ (Proposition \ref{lem1}). 
Then, we obtain a necessary condition of the finiteness of trace ideals in terms of the value set obtained by the canonical module (Theorem \ref{mainthm2}). 
In addition, if $R$ has minimal multiplicity, we give a one-to-one correspondence between the set of all trace ideals of $R$ (excluding $R$) and those of $\Hom_R(\fkm, \fkm)$ (Theorem \ref{main3}). As a corollary, analytically irreducible Arf local rings have finite trace ideals (Corollary \ref{cor4.11}). 
We also characterize when the set of all trace ideals is as small as possible in dimension at most one (Theorem \ref{aa3.3}).

\begin{notation}
Throughout this paper, let $R$ be a Noetherian ring, and let $I$ be an ideal in $R$. The following are well-known notions used in this paper.
\begin{itemize}
\item $\Rad (I)$ denotes the {\it radical} of $I$ in the sense of \cite[page 8]{AM}.
\item $J(R)$ denotes the {\it Jacobson radical} of $R$.
\item $\grade (I, R)$ denotes the {\it grade} of $I$ on $R$ in the sense of \cite[Definition 1.2.6]{BH}. An ideal $I$ with $\grade(I, R)>0$, i.e., an ideal $I$ containing a non-zerodivisor of $R$, is called a {\it regular ideal} in $R$.
\item Let $\rmQ(R)$ denote the {\it total ring of fractions} of $R$, and let $\ol{R}$ denote the {\it integral closure} of $R$. Then a finitely generated $R$-submodule of $\rmQ(R)$ containing a non-zerodivisor of $R$ is called a {\it fractional ideal}. For two fractional ideals $I$ and $J$, $I:J$ denotes the (fractional) colon ideal of $I$ and $J$ which is given by the set $\{\alpha\in \rmQ(R) \ \mid \  \alpha J\subseteq I\}$. It is known that $I:J\cong \Hom_R(J, I)$ by the corresponding of $\alpha \mapsto \hat{\alpha}$, where $\hat{\alpha}$ denotes the multiplication map by $\alpha$  (see \cite[page 17]{HK}). We denote by $I:_R J$ the restriction of $I:J$ in $R$, that is, $I:_RJ=(I:J)\cap R$.
\end{itemize}
\end{notation}

\begin{acknowledgments}
Question \ref{question}, the starting point of this paper, was asked by J\"urgen Herzog to the author. The author is grateful to him. The author thanks Ryotaro Isobe and Kazuho Ozeki for giving useful comments to Proposition \ref{lem1}, Lemma \ref{lem3}, and Theorem \ref{thm1}. The author also thanks Toshinori Kobayashi for telling the author about Kunz's coordinates (Remark \ref{remrem5.3}). The author is also grateful to the anonymous referee for his/her careful reading and for pointing out the error in the previous version.
\end{acknowledgments}

\section{When are ideals trace ideals?}\label{section2}

Let $R$ be a Noetherian ring, and let $I$ be an ideal in $R$. Let 
\begin{align*}
\Tr(R) &= \{\text{trace ideals in $R$}\} \quad \text{and}\\
\regTr(R) &= \{\text{regular trace ideals in $R$}\}.
\end{align*}
First, we note fundamental facts on trace ideals.

\begin{fact}\label{remtrace} {\rm (\cite[Example 2.4 and Proposition 2.8]{Lin})}
\begin{enumerate}[{\rm (a)}] 
\item  For any ideal $I$, $I\subseteq \tr_R(I)$. $I$ is a trace ideal if and only if $I=\tr_R(I)$. This is equivalent to saying that $\Im f\subseteq I$ for all $f\in \Hom_R(I, R)$, i.e., $\Hom_R(I, R)=\Hom_R(I, I)$. 
\item If $\grade (I, R)\ge 2$, then $I$ is a trace ideal in $R$.
\item Let $M$, $N$ be finitely generated $R$-modules. Then $\tr_R(M\oplus N)=\tr_R(M) + \tr_R(N)$.
\item Let $(R, \fkm)$ be a Noetherian local ring, and let $M$ be an $R$-module. Then $\tr_R(M)=R$ if and only if $M$ has a free summand.
\end{enumerate}
\end{fact}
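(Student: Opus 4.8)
The plan is to treat the four items separately, as each reduces to a short homological or formal computation. For part (a), I would begin with the observation that the inclusion $\iota\colon I\hookrightarrow R$ lies in $\Hom_R(I,R)$ and has image $I$, which gives $I\subseteq \tr_R(I)$ for free. The substantive point is the characterization that $I$ is a trace ideal if and only if $I=\tr_R(I)$, and for this I would prove that the trace operation is idempotent. Writing $I=\tr_R(M)$ and taking any $f\in \Hom_R(I,R)$, each $g\in \Hom_R(M,R)$ has image inside $I=\tr_R(M)$, so $g$ factors through $I$ and the composite $f\circ g\colon M\to R$ again has image in $\tr_R(M)=I$. Since $I=\sum_g \Im g$, applying $f$ gives $\Im f=\sum_g \Im(f\circ g)\subseteq I$, whence $\tr_R(I)=I$. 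The equivalence with $\Hom_R(I,R)=\Hom_R(I,I)$ is then a restatement: because $I\subseteq \tr_R(I)$ always holds, $I=\tr_R(I)$ is the same as $\Im f\subseteq I$ for every $f$, i.e.\ every map $I\to R$ factors through $I$.

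For part (b), I would use the canonical isomorphism $\Hom_R(M\oplus N,R)\cong \Hom_R(M,R)\oplus \Hom_R(N,R)$, under which a homomorphism $f$ corresponds to the pair of its restrictions with $\Im f=\Im(f|_M)+\Im(f|_N)$; summing over all $f$ yields the stated additivity. For part (c), the key input is the standard identification of grade with the vanishing order of Ext: $\grade(I,R)\ge 2$ forces $\Hom_R(R/I,R)=0$ and $\Ext^1_R(R/I,R)=0$. Feeding the short exact sequence $0\to I\to R\to R/I\to 0$ into the long exact sequence for $\Hom_R(-,R)$ then produces an isomorphism $\Hom_R(R,R)\cong \Hom_R(I,R)$, so every $f\in \Hom_R(I,R)$ is multiplication by some $r\in R$ and hence has image $rI\subseteq I$. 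Part (a) now shows that $I$ is a trace ideal.

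For part (d), the forward direction is immediate from (b): a free summand $R$ contributes $\tr_R(R)=R$, so $\tr_R(M)=R$. For the converse I would exploit locality. The equality $\tr_R(M)=R$ means $\sum_f \Im f=R\not\subseteq \fkm$, so some single $f\in \Hom_R(M,R)$ has image an ideal not contained in $\fkm$; such an ideal must equal $R$, giving a surjection $f\colon M\to R$. Since $R$ is free, hence projective, this surjection splits, and $M\cong R\oplus \Ker f$ exhibits a free summand.

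The only steps that require genuine care are the idempotency argument underlying (a) and the translation between grade and the vanishing of $\Ext^{0}$ and $\Ext^{1}$ in (c); both are standard but are the real content, whereas (b) and (d) are essentially formal once locality and the additivity of $\tr_R$ are in hand.
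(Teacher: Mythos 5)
Your four arguments are all correct, and they are the standard ones: the paper itself gives no proof of this Fact but simply cites Lindo (Example 2.4 and Proposition 2.8), where the same idempotency argument for (a), the splitting of $\Hom_R(M\oplus N,R)$ for (b), the identification $\Hom_R(I,R)\cong\Hom_R(R,R)$ via the vanishing of $\Ext^0$ and $\Ext^1$ of $R/I$ for (c), and the locality-plus-splitting argument for (d) appear. Nothing further is needed.
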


The following characterizes when prime ideals are trace ideals. Note that this result may be known, although we could not find any references. 

\begin{prop} \label{proptrace}
Let $\fkp$ be a prime ideal in $R$. Then, $\fkp$ is a trace ideal in $R$ if and only if $R_\fkp$ is not a discrete valuation ring. 
\end{prop}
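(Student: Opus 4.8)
The plan is to translate ``trace ideal'' into the condition from Fact \ref{remtrace}(a) and then localize at $\mathfrak{p}$, exploiting that both $\Hom$ and the trace commute with localization for the finitely generated module $\mathfrak{p}$ over the Noetherian ring $R$. Concretely, $\mathfrak{p}$ is a trace ideal if and only if $\tr_R(\mathfrak{p}) \subseteq \mathfrak{p}$ (the reverse inclusion being automatic), that is, if and only if every $f \in \Hom_R(\mathfrak{p}, R)$ satisfies $\Im f \subseteq \mathfrak{p}$; moreover $\tr_R(\mathfrak{p})_\mathfrak{p} = \tr_{R_\mathfrak{p}}(\mathfrak{p} R_\mathfrak{p})$. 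Since $\mathfrak{p} \subseteq \tr_R(\mathfrak{p})$, the entire question is whether the potential strict enlargement of $\mathfrak{p}$ is already detected after localizing at $\mathfrak{p}$, so I expect the proof to reduce to the local ring $(A, \mathfrak{m}) = (R_\mathfrak{p}, \mathfrak{p} R_\mathfrak{p})$ and to whether $\mathfrak{m}$ is a trace ideal in $A$.

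For the direction ``$R_\mathfrak{p}$ not a DVR $\Rightarrow \mathfrak{p}$ a trace ideal'' I would argue by contraposition. If $\mathfrak{p}$ is not a trace ideal, choose $f \in \Hom_R(\mathfrak{p}, R)$ and $x \in \mathfrak{p}$ with $f(x) \notin \mathfrak{p}$; localizing gives $g := f_\mathfrak{p} \colon \mathfrak{m} \to A$ whose image contains the unit $f(x)/1$, hence $g$ is surjective. Because $A$ is free, $g$ splits, yielding a decomposition $\mathfrak{m} = At \oplus K$ with $At \cong A$, and the section forces $t$ to be a non-zerodivisor. The key observation is then that $At = (t)$ and $K$ are both ideals of $A$ with $(t) \cap K = 0$, so $(t)K \subseteq (t) \cap K = 0$; since $t$ is a non-zerodivisor this gives $K = 0$. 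Thus $\mathfrak{m} = (t)$ is principal and generated by a non-zerodivisor, and a Noetherian local ring with such a maximal ideal is a DVR (via the Krull intersection theorem one checks that $A$ is a one-dimensional regular domain). This is the step I expect to be the crux: extracting the non-zerodivisor from the section and using the ideal-intersection trick to annihilate the complementary summand.

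The reverse direction ``$R_\mathfrak{p}$ a DVR $\Rightarrow \mathfrak{p}$ not a trace ideal'' is then short. If $R_\mathfrak{p}$ is a DVR, then $\mathfrak{p} R_\mathfrak{p}$ is principal and generated by a non-zerodivisor, hence free of rank one, so by Fact \ref{remtrace}(d) (or directly) $\tr_{R_\mathfrak{p}}(\mathfrak{p} R_\mathfrak{p}) = R_\mathfrak{p}$. Localizing gives $\tr_R(\mathfrak{p})_\mathfrak{p} = R_\mathfrak{p} \neq \mathfrak{p} R_\mathfrak{p}$, whence $\tr_R(\mathfrak{p}) \neq \mathfrak{p}$ and $\mathfrak{p}$ is not a trace ideal. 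The only routine points to verify throughout are the compatibility of $\Hom$ and $\tr$ with localization (guaranteed by $R$ being Noetherian and $\mathfrak{p}$ finitely generated) and the standard fact that, among Noetherian local rings, a principal maximal ideal generated by a non-zerodivisor characterizes DVRs.
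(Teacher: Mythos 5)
Your proof is correct and follows essentially the same route as the paper: both reduce to the local claim that $\fkm$ fails to be a trace ideal exactly when it has a free summand, and both kill the complementary summand via $tK \subseteq (t)\cap K = 0$ with $t$ a non-zerodivisor, concluding $\fkm=(t)$ and hence that the local ring is a DVR. The only organizational difference is that by splitting the equivalence into two separate implications you sidestep the paper's observation that $\Ass_R(\tr_R(\fkp)/\fkp) \subseteq \{\fkp\}$, which it needs in order to pass from $(\tr_R(\fkp)/\fkp)_\fkp = 0$ back to $\tr_R(\fkp)=\fkp$.
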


\begin{proof} 
$\fkp$ is a trace ideal if and only if  $\fkp=\tr_R (\fkp)$ by Fact \ref{remtrace}(a). Since $\tr_R (\fkp)/\fkp\subseteq R/\fkp$, we obtain that $\Ass_R \tr_R (\fkp)/\fkp \subseteq \{\fkp\}$. Hence, 
\begin{align*} 
&\text{$\fkp$ is a trace ideal} && \Leftrightarrow \quad  (\tr_R (\fkp)/\fkp)_\fkp=0 \quad \Leftrightarrow \quad  \tr_{R_\fkp} (\fkp R_\fkp)/\fkp R_\fkp =0 \\
 \Leftrightarrow \quad & \text{$\fkp R_\fkp$ is a trace ideal in $R_\fkp$} && \Leftrightarrow \quad  \text{$R_\fkp$ is not a discrete valuation ring,} 
\end{align*} 
where the second equality follows from \cite[Proposition 2.8(viii)]{Lin} and the fourth equality follows from the following claim.
\end{proof}

\begin{claim} \label{claim1} {\rm (\cite[Proposition 1.8]{HM})}
Let $(R, \fkm)$ be a Noetherian local ring. Then $\fkm$ is not a trace ideal if and only if $R$ is a discrete valuation ring. 
\end{claim}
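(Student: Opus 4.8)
The plan is to work entirely through the reformulation recorded in Fact \ref{remtrace}(a): the ideal $\fkm$ is a trace ideal precisely when $\Hom_R(\fkm,R)=\Hom_R(\fkm,\fkm)$, i.e.\ when every $R$-linear map $\fkm\to R$ has image contained in $\fkm$. Since $(R,\fkm)$ is local, an ideal of $R$ that is not contained in $\fkm$ must equal $R$; hence $\fkm$ is \emph{not} a trace ideal if and only if there exists a surjection $f\colon\fkm\to R$. (Equivalently, by Fact \ref{remtrace}(d), $\tr_R(\fkm)=R$ exactly when $\fkm$ has a free summand.) For the easy implication, if $R$ is a discrete valuation ring with uniformizer $t$, then $\fkm=(t)$ with $t$ a non-zerodivisor, and the assignment $rt\mapsto r$ is a well-defined $R$-linear surjection $\fkm\to R$; equivalently $\fkm\cong R$, so $\tr_R(\fkm)=R\neq\fkm$ and $\fkm$ is not a trace ideal.

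Conversely, suppose such a surjection $f\colon\fkm\to R$ exists, and choose $x\in\fkm$ with $f(x)=1$. The key step is to evaluate $f$ on the element $xy\in\fkm$ for an arbitrary $y\in\fkm$ in two ways: reading $xy$ as $y\cdot x$ gives $f(xy)=y\,f(x)=y$, while reading it as $x\cdot y$ gives $f(xy)=x\,f(y)$. Comparing the two yields the relation $y=x\,f(y)$ for all $y\in\fkm$, which forces $\fkm=(x)$ to be principal. I then expect to deduce that $x$ is a non-zerodivisor directly from $f$: if $ax=0$ for some $a\in R$, then applying $f$ gives $0=f(ax)=a\,f(x)=a$. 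Finally I invoke the standard fact that a Noetherian local ring whose maximal ideal is principal and generated by a non-zerodivisor is a discrete valuation ring: using Krull's intersection theorem every nonzero element is a unit times a power of $x$, so $R$ is a one-dimensional local domain with principal maximal ideal, hence a discrete valuation ring.

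The substantive content is concentrated in the single identity $y=x\,f(y)$, which simultaneously delivers principality of $\fkm$ and, in combination with $f(x)=1$, the non-zerodivisor property of its generator; the passage from ``principal maximal ideal generated by a non-zerodivisor'' to ``discrete valuation ring'' is then routine. The main point requiring care is that $R$ is assumed only to be Noetherian local, so a priori $\fkm$ could consist entirely of zerodivisors and $R$ need be neither a domain nor of positive depth. The computation extracted from $f$ is exactly what rules this out automatically, which explains why no depth, reducedness, or Cohen--Macaulay hypothesis is needed in the statement.
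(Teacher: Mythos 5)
Your proof is correct and follows essentially the same route as the paper: both reduce ``$\fkm$ is not a trace ideal'' to $\tr_R(\fkm)=R$, i.e.\ to $\fkm$ admitting a surjection onto $R$ (equivalently a free summand), and then show this forces $\fkm$ to be principal and generated by a non-zerodivisor, whence $R$ is a discrete valuation ring. The only cosmetic difference is that you extract the splitting directly via the identity $y=x\,f(y)$, whereas the paper invokes Fact \ref{remtrace}(d) and kills the complementary ideal $I$ through $aI\subseteq (a)\cap I=0$.
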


\begin{proof}[Proof of Claim \ref{claim1}]
Since $\fkm \subseteq \tr_R(\fkm)$, $\fkm$ is not a trace ideal if and only if $\tr_R(\fkm)=R$. By Fact \ref{remtrace}(d), this is equivalent to saying that $\fkm$ has a free summand. In other words, $\fkm=(a) + I$ and $(a)\cap I=0$ for some non-zerodivisor $a$ of $R$ and some ideal $I$ in $R$. It follows that $aI\subseteq (a)\cap I=0$; hence, $I=0$. Therefore, $\fkm$ is not a trace ideal if and only if $\fkm =(a)$ for some non-zerodivisor $a$ of $R$.
\end{proof}

Let $(R, \fkm)$ be a Noetherian local ring of dimension $>0$. Let 
\[
H=\bigcup_{n>0} (0):_R \fkm^n\cong \rmH_\fkm^0(R)
\] 
denote the zeroth local cohomology of $R$. By applying the local cohomology functor $\rmH_\fkm^* (-)$ to 
$0 \to H \to R \to R/H \to 0$, we obtain that $0\to H \xrightarrow{\cong} H \to \rmH_\fkm^0(R/H) \to 0$.
Hence, $\depth R/H>0$. 
The following proposition is effective for attributing to rings with positive depth.

\begin{prop} \label{lem1}
Let $(R, \fkm)$ be a Noetherian local ring of dimension $>0$. Let $H$ denote the zeroth local cohomology of $R$. Suppose that $I$ is an ideal in $R$ such that $I\supseteq H$. If $I/H$ is a trace ideal in $R/H$ containing a non-zerodivisor of $R/H$, then $I$ is a trace ideal in $R$. In particular, if $\Tr(R)$ is finite, then 
 $\regTr(R/H)$ is also finite.
\end{prop}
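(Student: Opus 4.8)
The plan is to prove the displayed implication first and then read off the finiteness statement as a formal consequence. The whole argument rests on one observation: since $H=\rmH_\fkm^0(R)$ consists of elements annihilated by a power of $\fkm$, it is carried into itself by every $R$-homomorphism out of an ideal containing it. This lets me pass freely between homomorphisms over $R$ and over $R/H$, after which the trace-ideal criterion of Fact \ref{remtrace}(a) does the rest.

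For the main implication, let $\pi\colon R\to R/H$ be the canonical projection and suppose $I\supseteq H$ with $I/H$ a trace ideal in $R/H$. By Fact \ref{remtrace}(a) it suffices to show $\Im f\subseteq I$ for every $f\in\Hom_R(I,R)$. First I would check that $f(H)\subseteq H$: if $h\in H$ with $\fkm^n h=0$, then $\fkm^n f(h)=f(\fkm^n h)=0$, so $f(h)\in (0):_R\fkm^n\subseteq H$. Consequently $f$ descends to an $R/H$-linear map $\bar f\colon I/H\to R/H$, given by $\bar f(x+H)=f(x)+H$. Since $I/H$ is a trace ideal in $R/H$, Fact \ref{remtrace}(a) applied over $R/H$ gives $\bar f(I/H)\subseteq I/H$, that is, $\pi(f(I))\subseteq I/H$. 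Because $I\supseteq H=\ker\pi$, pulling back along $\pi$ yields $f(I)\subseteq I$. As $f$ was arbitrary, $I$ is a trace ideal in $R$.

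Granting this, the finiteness claim is a counting argument. Each $\bar J\in\regTr(R/H)$ is a trace ideal of $R/H$ containing a non-zerodivisor, so its preimage satisfies the hypotheses of the implication just proved, with $I=\pi^{-1}(\bar J)\supseteq H$ and $I/H=\bar J$; hence $\pi^{-1}(\bar J)\in\Tr(R)$. Thus $\bar J\mapsto\pi^{-1}(\bar J)$ defines a map $\regTr(R/H)\to\Tr(R)$, which is injective by the standard bijective correspondence between ideals of $R/H$ and ideals of $R$ containing $H$ (applying $\pi$ recovers $\bar J$). An injection into a finite set has finite domain, so finiteness of $\Tr(R)$ forces finiteness of $\regTr(R/H)$, with $\#\regTr(R/H)\le\#\Tr(R)$.

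The only genuinely substantive step is the stability $f(H)\subseteq H$ together with the descent of $f$ to the quotient; once that is in place everything is formal. I expect this to be the main (and only) obstacle, and it is a mild one, amounting to the fact that local cohomology is preserved by homomorphisms. I would also note that the non-zerodivisor hypothesis is not what drives the descent argument — that argument works for any $I\supseteq H$ with $I/H$ a trace ideal — but it is precisely the condition singling out $\regTr(R/H)$, and it is what guarantees that the implication is being fed the correct source set in the counting step.
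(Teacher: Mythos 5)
Your proof is correct, but it takes a genuinely different (and in fact more elementary) route than the paper's. The paper applies $\Hom_R(-,R/H)$ to the exact sequence $0\to H/IH\to I/IH\xrightarrow{\pi} I/H\to 0$ and uses the non-zerodivisor hypothesis to kill $\Hom_R(H/IH,R/H)$, so that the reduction $g\colon I/IH\to R/H$ of a given $f\in\Hom_R(I,R)$ factors through $I/H$; only then can it invoke the trace-ideal property of $I/H$. You instead observe that every $f\in\Hom_R(I,R)$ satisfies $f(H)\subseteq H$ --- which is just the functoriality of $\fkm$-torsion, and your verification via $\fkm^n h=0\Rightarrow\fkm^n f(h)=0$ is airtight --- so $f$ itself descends to an $R/H$-linear map $I/H\to R/H$ and the conclusion is immediate. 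Your approach buys two things: it bypasses the homological step entirely, and, as you correctly note, it shows the non-zerodivisor hypothesis is not needed for the implication ``$I/H$ a trace ideal in $R/H$ $\Rightarrow$ $I$ a trace ideal in $R$'' (it only serves to delimit the set $\regTr(R/H)$ in the finiteness statement). The paper's approach, by contrast, localizes the use of that hypothesis in the vanishing of a single $\Hom$ module, which is a pattern that recurs elsewhere (e.g.\ in the proof of Theorem \ref{thm1}) but is not essential here. Your counting argument for the ``in particular'' clause, via the injection $\bar J\mapsto\pi^{-1}(\bar J)$ from $\regTr(R/H)$ into $\Tr(R)$, is exactly the intended one.
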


\begin{proof}
If $H=0$, then there is nothing to prove. Suppose that $H\ne 0$. 
Since $I$ contains a non-zerodivisor of $R/H$, it follows that $\Hom_{R} (H/IH, R/H)=0$.
By applying the functor $\Hom_{R}(-, R/H)$ to $0 \to H/IH \to I/IH \xrightarrow{\pi} I/H \to 0$, we obtain that 
\begin{align}\label{great}
\Hom_{R}(I/H, R/H) \xrightarrow{\pi^*} \Hom_{R} (I/IH, R/H)
\end{align}
is an isomorphism.

Now, let $f\in \Hom_R(I, R)$. Then, the composition $I\xrightarrow{f} R \to R/H$ induces the map $g: I/IH \to R/H$. By the isomorphism \eqref{great}, $g$ factors through $\pi$, that is, there exists $h\in \Hom_{R} (I/H, R/H)$ such that $h\circ \pi=g$. Therefore, we obtain that
\[
[\Im f + H]/H=\Im g = \Im (h\circ \pi)=\Im h\subseteq \tr_{R/H}(I/H)=I/H.
\]
Hence, $\Im f\subseteq I$ for all $f\in \Hom_R(I, R)$. This proves that $I$ is a trace ideal in $R$.
\end{proof}

\begin{lem} \label{lem24}
\label{lem2}
Let $R$ be a Noetherian ring. Then 
\[
\ol{R}=\bigcup_{I\in \regTr(R)} I:I.
\]
In particular, if $\regTr(R)$ is finite, then $\ol{R}$ is finitely generated as an $R$-module.
\end{lem}

\begin{proof}
($\supseteq$): Note that $I:I$ is a subring of $\rmQ(R)$ and finitely generated as an $R$-module since $I:I\cong \Hom_R(I, I)$. Hence, $I:I$ is a subring of $\ol{R}$.

($\subseteq$): By noting that $R$ is a trace ideal in $R$ itself, the right hand side of the equation contains $R=R:R$.
Let $\alpha\in \ol{R}\setminus R$, and set $I=R:R[\alpha]$. Then $I\subsetneq R$ is a trace ideal in $R$ because $I=(R:R[\alpha])R[\alpha]=\tr_R(R[\alpha])$. Hence, because $IR[\alpha]=I$, we obtain that $\alpha\in R[\alpha]\subseteq I:I$.
\end{proof}

\begin{lem} \label{lem3}
Let $R$ be a Noetherian ring, and let $I$ be a regular ideal in $R$. Let $B$ be an intermediate ring between $R$ and $\ol{R}$. 
If $IB$ is a trace ideal in $B$ and $IB\cap R=I$, then $I$ is a regular trace ideal in $R$. 
\end{lem}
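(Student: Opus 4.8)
The plan is to verify the defining property of a trace ideal directly through Fact \ref{remtrace}(a). Since $I$ is regular by hypothesis, it suffices to prove that $I$ is a trace ideal, and since the inclusion $I\subseteq \tr_R(I)$ always holds, everything reduces to showing $\tr_R(I)\subseteq I$. I would work throughout inside $\rmQ(R)$, which simultaneously contains $R$, $B$, and $\ol{R}$, and first record the formula $\tr_R(I)=(R:I)\,I$: because $I$ is regular it is a fractional ideal, so $\Hom_R(I,R)\cong R:I$ via the multiplication maps $\alpha\mapsto\hat\alpha$, and summing the images $\hat\alpha(I)=\alpha I$ over all $\alpha\in R:I$ produces exactly $(R:I)\,I$. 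Thus the problem becomes: show $\alpha I\subseteq I$ for every $\alpha\in R:I$.

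The key step is the observation that every such $\alpha$ also lies in $B:IB$. Indeed, $\alpha\in R:I$ means $\alpha I\subseteq R$, and multiplying by $B$ gives $\alpha\,IB=(\alpha I)B\subseteq RB=B$, so $\alpha\in B:IB$. Now I would invoke the hypothesis that $IB$ is a trace ideal in $B$. Since a non-zerodivisor of $R$ lying in $I$ is a unit of $\rmQ(R)$, it remains a non-zerodivisor of the subring $B$, so $IB$ is a regular ideal of $B$; consequently $\Hom_B(IB,B)\cong B:IB$, and Fact \ref{remtrace}(a) applied to $B$ says that every $\beta\in B:IB$ satisfies $\beta\,IB\subseteq IB$. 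Taking $\beta=\alpha$ gives $\alpha\,IB\subseteq IB$, and hence $\alpha I\subseteq \alpha\,IB\subseteq IB$.

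Finally I would combine the two containments now available for $\alpha I$. We have simultaneously $\alpha I\subseteq R$ (from $\alpha\in R:I$) and $\alpha I\subseteq IB$ (just established), so $\alpha I\subseteq IB\cap R=I$ by the standing hypothesis. This is precisely $\tr_R(I)=(R:I)\,I\subseteq I$, so $I=\tr_R(I)$ is a trace ideal, and it is regular by assumption.

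I do not expect a genuine obstacle; the argument is a short transfer of the trace property between $B$ and $R$. The points that merit care are the routine identifications of $\Hom$ with fractional colon ideals, which rely on regularity, and the verification that a non-zerodivisor of $R$ stays a non-zerodivisor in $B\subseteq\rmQ(R)$ so that $IB$ is genuinely regular and the trace-ideal hypothesis on it is meaningful. The conceptual heart is recognizing the inclusion $R:I\subseteq B:IB$, which lets the trace property be pulled back from $B$, together with the fact that the hypothesis $IB\cap R=I$ is exactly what forces the conclusion to descend from $B$ to $R$.
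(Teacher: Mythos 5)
Your proof is correct and follows essentially the same route as the paper's: both rest on the inclusion $R:I\subseteq B:IB$, the identity $(B:IB)IB=IB$ coming from the trace-ideal hypothesis on $IB$, and the descent $\tr_R(I)\subseteq IB\cap R=I$. Your version merely spells out element-wise (and with the justification that $IB$ is regular in $B$) what the paper writes compactly as $I\subseteq\tr_R(I)=(R:I)I\subseteq(B:IB)IB=IB$.
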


\begin{proof}
Since $R:I\subseteq B: IB$, we obtain that 
\[
I\subseteq \tr_R(I)=(R:I)I\subseteq (B: IB)IB =IB.
\]
Hence, $I\subseteq \tr_R(I) \subseteq IB\cap R=I$.
\end{proof}

Now, we can prove the first main theorem of this paper.

\begin{thm} \label{thm1}
Let $(R, \fkm)$ be a Noetherian local ring. Suppose that $\Tr(R)$ is finite. Then the following holds true.
\begin{enumerate}[\rm(a)] 
\item $\dim R\le 2$ and $\ol{R}$ is finitely generated as an $R$-module.
\item If $\ol{R/H}$ is equi-dimensional (that is, all maximal ideals of $\ol{R/H}$ have the same height), where $H$ is the zeroth local cohomology, then $\dim R \le 1$.
\end{enumerate}
\end{thm}

\begin{proof}
(a): Suppose that there exists a prime ideal $\fkq$ of height $3$. Then there are infinitely many prime ideals contained in $\fkq$ such that the heights of prime ideals are $2$. Indeed, assume that there are only finite prime ideals $\fkp_1, \fkp_2, \dots, \fkp_n$ contained in $\fkq$ such that $\height_R \fkp_i=2$. Then, by the prime avoidance theorem, there exists an element $a\in \fkq \setminus \bigcup_{i=1}^{n} \fkp_i$. This proves that $\height_{R_\fkq} (a)= 3$. This contradicts the Krull's height theorem. Therefore, if $\Tr(R)$ is finite, then there is no prime ideal of height $3$ by Proposition \ref{proptrace}. Thus $\dim R\le 2$. 

The fact that $\ol{R}$ is finitely generated as an $R$-module is obtained by Lemma \ref{lem24}.

(b): Suppose that there exists a Noetherian local ring $(R, \fkm)$ of dimension $2$ such that $\Tr(R)$ is finite. 
Let $A=R/H$ and $\fkm_A=\fkm/H$, where $H$ denotes the zeroth local cohomology of $R$. Note that $\dim A=2$ and $\depth A>0$. 
By Proposition \ref{lem1}, $\regTr(A)$ is finite. 
$\ol{A}$ is finitely generated as an $A$-module by Lemma \ref{lem2}. 
Let $J(\ol{A})$ denote the Jacobson radical of $\ol{A}$.
By applying the $\ol{A}$-dual to 
\[
0 \to J(\ol{A}) \xrightarrow{\iota} \ol{A} \to \ol{A}/J(\ol{A}) \to 0,
\] 
we obtain that 
\begin{align}
\begin{split} \label{eq1234}
0\to& \Hom_{\ol{A}} (\ol{A}/J(\ol{A}), \ol{A}) \to \Hom_{\ol{A}} (\ol{A}, \ol{A}) \xrightarrow{\iota^*}  \Hom_{\ol{A}} (J(\ol{A}), \ol{A}) \\
\to& \Ext_{\ol{A}}^1(\ol{A}/J(\ol{A}), \ol{A}) \to 0. 
\end{split}
\end{align}
Since $\ol{R}$ is finitely generated as an $R$-module, $J(\ol{A})$ contains a non-zerodivisor of $\ol{A}$. It follows that the map $\iota^*$ is identified by the inclusion $\ol{A} \to \ol{A}:J(\ol{A})$. We prove that $J(\ol{A})$ is a trace ideal in $\ol{A}$. Indeed, $\tr_{\ol{A}}(J(\ol{A}))/J(\ol{A})$ is of finite length and $(\tr_{\ol{A}}(J(\ol{A}))/J(\ol{A}))_\fkn=\tr_{\ol{A}_\fkn}(\fkn \ol{A}_\fkn)/\fkn \ol{A}_\fkn$ for all $\fkn\in \Max \ol{A}$. By the assumption, each $\ol{A}_\fkn$ is of dimension $2$ and thus it is not a discrete valuation ring; hence, $\tr_{\ol{A}_\fkn}(\fkn \ol{A}_\fkn)/\fkn \ol{A}_\fkn=0$ by Proposition \ref{proptrace}. Therefore, $\tr_{\ol{A}}(J(\ol{A}))/J(\ol{A})=0$. 
Since $J(\ol{A})$ is a trace ideal, it follows that $\ol{A}\subseteq \ol{A}:J(\ol{A})=J(\ol{A}):J(\ol{A})\subseteq \ol{A}$. Thus, $\iota^*$ in \eqref{eq1234} is an isomorphism; hence, $\grade(J(\ol{A}), \ol{A})=2$ by \eqref{eq1234}.

Let $J$ be an ideal in $\ol{A}$ such that $J(\ol{A}) \subseteq \Rad(J)$.  Then $J\cap A$ is an $\fkm_A$-primary ideal in $A$. By noting that $\ol{A}$ is finitely generated as an $A$-module, $\ell_{\ol{A}} (\ol{A}/(J\cap A)\ol{A})\le \ell_{A} (\ol{A}/(J\cap A)\ol{A})<\infty$, thus $J(\ol{A}) \subseteq \Rad((J\cap A)\ol{A})$. It follows that $(J\cap A)\ol{A}$ is a trace ideal in $\ol{A}$ by Fact \ref{remtrace}(b).
On the other hand, $(J\cap A)\ol{A} \cap A=J\cap A$ (see \cite[Proposition 1.17(ii)]{AM}). 
Therefore, by applying Lemma \ref{lem3} as $B=\ol{A}$, $\regTr(A)$ includes the set 
\[
S=\{J\cap A\ \mid \ \text{$J$ is an ideal in $\ol{A}$ such that $J(\ol{A}) \subseteq \Rad(J)$}\}.
\]
Let $S=\{J_i\cap A\  : \ 1\le i \le \ell\}$. Then 
\[
\textstyle \left(\bigcap_{i=1}^\ell J_i\right) \cap A\in S.
\]
By noting that $\left(\bigcap_{i=1}^\ell J_i\right) \cap A=\bigcap_{i=1}^\ell \left(J_i \cap A\right)$, there exists the smallest element $J_j\cap A$ in $S$, where $1\le j \le \ell$. In particular, $J_j\cap A\subseteq J(\ol{A})^s \cap A$ for all $s>0$. It follows that 
\[
(J_j \cap A)\ol{A}\subseteq \bigcap_{s>0} J(\ol{A})^s=0
\]
by Krull's theorem (\cite[Corollary 10.19]{AM}). Hence, $J_j \cap A=0$. This contradicts the facts that $\dim A=2$ and $J_j\cap A$ is an $\fkm_A$-primary ideal. Therefore, there is no Noetherian local ring $R$ of dimension $2$ such that $\Tr(R)$ is finite; hence, $\dim R\le 1$.
\end{proof}

We note a remark for the assumption that $\ol{R/H}$ is equi-dimensional in Theorem \ref{thm1}. 

\begin{rem}
Let $K$ be a field, and let $K[[X, Y]]$, $K[[Z]]$ be formal power series rings over $K$.
\begin{enumerate}[\rm(a)] 
\item Set $R=K[[X, X^2Y, X^2Y^2, X^3Y^3]]$. Then $R$ is a Noetherian local domain of dimension two and thus the zeroth local cohomology $H$ is zero. Furthermore, we have $\ol{R}=K[[X, XY]]$ and hence $\ol{R}$ is a local ring. Therefore, by Theorem \ref{thm1}(b), $\Tr(R)$ is an infinite set.
\item Set 
\begin{center}
$S=K[[X, Y]]\times K[[Z]]$ \quad and \quad $R=K(1,1) + J(S) \subseteq S$, 
\end{center}
where $J(S)=\{(a,b) \mid a\in (X, Y), b\in (Z)\}$ is the Jacabson radical of $S$. Then, $R$ is a Noetherian local ring with positive depth since $(X, Z)$ is a non-zerodivisor of $R$. Furthermore, $\ol{R}=S$ since $(1,0)^2-(1,1)(1,0)=0$. It follows that $\ol{R}$ is not equi-dimensional. However, one can also check that $((X, 0), (Y, 0))^s + ((0,Z))\in \Tr(R)$ for all $s>0$, thus $\Tr(R)$ is  infinite.
\end{enumerate}
\end{rem}

The following is an example of a (one-dimensional) non-Cohen-Macaulay local ring having finite trace ideals.

\begin{ex} \label{ex1}
Let $R=K[[X, Y]]/(XY, Y^2)$, where $K$ is a field and $K[[X, Y]]$ is a formal power series ring over $K$. Then $R$ is a Noetherian local ring of dimension $1$ and of depth $0$. Furthermore, we obtain that 
\[
\Tr(R)=\{0, (y), (x, y), R\},
\]
where $x$ and $y$ denote the image of $X$ and $Y$ into $R$, respectively.
\end{ex}

\begin{proof} 
It is easy to see that $R$ is of dimension $1$ and of depth $0$. 
Note that $(y)=(0):_R~\fkm=(0):_R x$. Hence, the image of every $R$-linear homomorphism $f\in \Hom_R((y), R)$ is in $(y)$. Hence, $(y)\in \Tr(R)$. $(x, y)\in \Tr(R)$ follows from Proposition \ref{proptrace}. It is clear that $0$ and $R$ are in $\Tr(R)$.

Suppose that there exists a trace ideal $I$ such that $I\not\in \{0, (y), (x, y), R\}$. Then $I\supseteq (y)$ because 
\[
I \twoheadrightarrow I/\fkm I \twoheadrightarrow R/\fkm \cong (y)\subseteq R.
\] 
Thus, $I/(y)$ is a nonzero ideal in a discrete valuation ring $R/(y)$; hence, $I/(y) \cong R/(y)$. Therefore, we obtain an $R$-linear homomorphism
\[
\varphi: I \twoheadrightarrow I/(y) \cong R/(y) \xrightarrow{\hat{x}} R,
\]
where $\hat{x}$ is a multiplication map by $x$, and thus the image of $\varphi$ is $(x)$. It follows that $I=\tr_R(I)\supseteq (x)$. Hence, $I$ is either $(x, y)$ or $R$. This is a contradiction.
\end{proof}

\section{Smallest trace ideals in dimension $\le 1$}\label{section3}

Due to Theorem \ref{thm1}, in what follows, we focus on the case of dimension $\le 1$. In this case, there exists the smallest trace ideal (excluding the zero ideal). As a result, we obtain a characterization of when the number of elements in $\Tr(R)$ is at most $3$ (Theorem \ref{aa3.3}).

\begin{lem}\label{aa2.1}
Let $(R, \fkm)$ be an Noetherian local ring of depth $0$. Then the socle $\Soc R=(0):_R \fkm$ of $R$ is a trace ideal, and every nonzero trace ideal in $R$ contains $\Soc R=(0):_R \fkm$.
\end{lem}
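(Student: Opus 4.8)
The plan is to prove the two assertions separately, using as the main tool the identity $\tr_R(R/\fkm)=\Soc R$ together with the monotonicity of the trace under surjections. Note that the hypothesis $\depth R=0$ is exactly what makes $\Soc R=(0):_R\fkm\ne 0$, so that the socle is a genuinely nonzero trace ideal; the containment argument itself will not otherwise need it.

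For the first assertion I would argue directly from Fact \ref{remtrace}(a), according to which it suffices to check that $\Im f\subseteq \Soc R$ for every $f\in \Hom_R(\Soc R, R)$. If $s\in \Soc R$, so $\fkm s=0$, then for any such $f$ we have $\fkm\cdot f(s)=f(\fkm s)=f(0)=0$, whence $f(s)\in (0):_R\fkm=\Soc R$. Thus $\tr_R(\Soc R)\subseteq \Soc R$, and combining this with the general inclusion $\Soc R\subseteq \tr_R(\Soc R)$ of Fact \ref{remtrace}(a) yields $\Soc R=\tr_R(\Soc R)$, i.e.\ $\Soc R$ is a trace ideal.

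For the second assertion I would first record two elementary facts. First, a short computation gives $\tr_R(R/\fkm)=\Soc R$: every $f\in \Hom_R(R/\fkm, R)$ is determined by $f(\bar 1)=s$ with $s\in (0):_R\fkm=\Soc R$, and its image is $Rs\subseteq \Soc R$; summing over all such $f$ gives $\tr_R(R/\fkm)=\sum_{s\in \Soc R}Rs=\Soc R$. Second, if there is a surjection $N\twoheadrightarrow M$ then $\tr_R(M)\subseteq \tr_R(N)$, since precomposing any $g\in\Hom_R(M,R)$ with the surjection does not change its image. Now let $I$ be a nonzero trace ideal. By Nakayama's lemma $I/\fkm I\ne 0$, so there is a surjection $I\twoheadrightarrow I/\fkm I\twoheadrightarrow R/\fkm$; applying the monotonicity and then the computation gives $\Soc R=\tr_R(R/\fkm)\subseteq \tr_R(I)=I$, the final equality holding because $I$ is a trace ideal. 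This proves every nonzero trace ideal contains $\Soc R$.

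The only real subtlety is the containment direction, and it is worth noting which naive approach to avoid. One is tempted to exhibit a single nonzero socle element inside $I$ directly, say by taking $0\ne a\in I$ and the largest $n$ with $\fkm^n a\ne 0$, so that $\fkm^n a\subseteq \Soc R\cap I$ is nonzero; but this only shows that $I$ meets the socle, and it fails to recover all of $\Soc R$ when $\dim_{R/\fkm}\Soc R>1$. Routing instead through $\tr_R(R/\fkm)=\Soc R$ and the monotonicity of the trace captures an arbitrary (possibly higher-dimensional) socle in one stroke, so I expect no genuine obstacle beyond these routine observations.
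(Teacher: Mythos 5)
Your proof is correct and follows essentially the same route as the paper: the first part is the same direct check that $\fkm\cdot f(s)=f(\fkm s)=0$, and the second part uses the same surjection $I\twoheadrightarrow I/\fkm I\twoheadrightarrow R/\fkm$, merely packaged through the identity $\tr_R(R/\fkm)=\Soc R$ and monotonicity of trace under surjections rather than the paper's element-by-element composite $I\to R/\fkm\cong Rx\hookrightarrow R$.
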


\begin{proof}
It is clear that for $f\in \Hom_R(\Soc R, R)$, $\Im f\subseteq \Soc R$. Hence, $\Soc R$ is a trace ideal.

Let $I$ be a nonzero trace ideal in $R$. Note that we have a surjection $I\twoheadrightarrow I/\fkm I \twoheadrightarrow R/\fkm$. Hence, for any $0\ne x\in \Soc R$, we have a map $I \to R/\fkm \cong Rx \hookrightarrow R$. Hence $x\in \tr_R(I)=I$.
\end{proof}

\begin{lem}\label{aa2.2}
Let $(R, \fkm)$ be a Cohen-Macaulay local ring of dimension $1$ such that the residue field $R/\fkm$ is infinite. Suppose that $\ol{R}$ is finitely generated as an $R$-module. Then, the conductor $R:\ol{R}$ of $\ol{R}$ is a regular trace ideal, and every regular trace ideal in $R$ contains $R:\ol{R}$.
\end{lem}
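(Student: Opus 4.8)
The plan is to work throughout with the fractional-ideal description of the trace: for a fractional ideal $M$ one has $\Hom_R(M,R)\cong R:M$, so that $\tr_R(M)=(R:M)M$, and consequently an ideal $I$ is a trace ideal exactly when $(R:I)I=I$. Write $\fkc=R:\ol R$ for the conductor, and split the statement into its two assertions.

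\emph{The conductor is a regular trace ideal.} First I would observe that $\fkc$ is regular: since $\ol R$ is module-finite over $R$, clearing the (non-zerodivisor) denominators of a finite generating set produces a non-zerodivisor $d$ with $d\ol R\subseteq R$, i.e.\ $d\in\fkc$. Next, $\fkc$ is by definition the largest $R$-submodule of $R$ closed under multiplication by $\ol R$, so $\fkc\,\ol R=\fkc$. Hence $\tr_R(\ol R)=(R:\ol R)\,\ol R=\fkc\,\ol R=\fkc$, exhibiting $\fkc$ as the trace ideal of the module $\ol R$. This step needs only that $\ol R$ is module-finite and that $R$ has a non-zerodivisor; the remaining hypotheses enter only for the minimality assertion.

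\emph{Every regular trace ideal contains $\fkc$.} Let $I$ be a regular trace ideal; the goal is $\fkc\subseteq I$. I may assume $I\ne R$, so that $I$ is $\fkm$-primary (a proper regular ideal in dimension one). Here I would invoke that $R$ is Cohen-Macaulay of dimension one with infinite residue field: the ideal $I$ then admits a principal minimal reduction $(a)$, and $a$ may be taken to be a non-zerodivisor (a superficial element is a parameter, hence a non-zerodivisor in the Cohen-Macaulay ring). From $(a)$ being a reduction we get $I\subseteq\ol{(a)}=a\ol R\cap R$, whence $a^{-1}I\subseteq\ol R$. The final computation ties the pieces together: for any $c\in\fkc$ we have $c\,(a^{-1}I)\subseteq c\,\ol R\subseteq R$, so $ca^{-1}\in R:I$; because $I$ is a trace ideal, $(R:I)I=I$, hence $ca^{-1}I\subseteq I$, and since $a\in I$ this gives $c=(ca^{-1})a\in I$. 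As $c$ was arbitrary, $\fkc\subseteq I$.

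The main obstacle is the input of the second part: securing a \emph{principal} reduction generated by a \emph{non-zerodivisor}, which is precisely where ``Cohen-Macaulay, dimension one, infinite residue field'' are needed, together with the identity $\ol{(a)}=a\ol R\cap R$ for a non-zerodivisor $a$ (proved by dividing an integral equation over $(a)$ by $a^{n}$). Once $a^{-1}I\subseteq\ol R$ is in hand, the passage $c\in\fkc\Rightarrow ca^{-1}\in R:I\Rightarrow c\in I$ is a short formal manipulation of colon ideals, and the first part is essentially a one-line identification of $\fkc$ with $\tr_R(\ol R)$.
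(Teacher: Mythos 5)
Your proposal is correct and follows essentially the same route as the paper: identifying $\fkc=(R:\ol R)\ol R=\tr_R(\ol R)$ for the first assertion, and for the second using the infinite residue field to produce a principal reduction $(a)$ of a regular trace ideal $I$ with $a$ a non-zerodivisor, so that $\tfrac{I}{a}\subseteq\ol R$ and the containment $\fkc\subseteq I$ drops out of the identity $I=(R:I)I$. Your element-by-element computation with $ca^{-1}$ is just an unpacking of the paper's chain $I=(R:\tfrac{I}{a})\tfrac{I}{a}\supseteq R:\tfrac{I}{a}\supseteq R:\ol R$.
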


\begin{proof}
Since $R:\ol{R}=(R:\ol{R})\ol{R}=\tr_R(\ol{R})$, $R:\ol{R}$ is a trace ideal. $R:\ol{R}$ contains a non-zerodivisor of $R$ since $\ol{R}$ is finitely generated as an $R$-module.

Let $I\in \regTr(R)$. Because $R/\fkm$ is infinite, there exists a non-zerodivisor $a\in I$ such that $I^{n+1}=aI^n$ for some $n>0$. This is equivalent to saying that $(a)\subseteq I \subseteq \ol{(a)}=a\ol{R}\cap R$, where $\ol{(a)}$ denotes the integral closure of $(a)$ (\cite[Corollary 1.2.5]{SH}). It follows that 
\[
R\subseteq \tfrac{I}{a}=\{x/a\in \rmQ(R) \ \mid \ x\in I\} \subseteq \ol{R}.
\]
Hence, we obtain that 
\begin{align*}
I=\tr_R(I)=(R:I)I=\left(R:\tfrac{I}{a}\right)\tfrac{I}{a}\supseteq R:\tfrac{I}{a}\supseteq R:\ol{R}.
\end{align*}
\end{proof}

\begin{thm}\label{aa3.3}
Let $(R, \fkm)$ be a Noetherian local ring of dimension $\le 1$ such that $R/\fkm$ is infinite. Then the following are equivalent:
\begin{enumerate}[{\rm (a)}] 
\item $\Tr(R)\subseteq \{0, \fkm, R\}$.
\item $R$ satisfies either one of the following:
\begin{enumerate}[{\rm (i)}] 
\item $R$ is an Artinian ring having minimal multiplicity, i.e., $\fkm^2=0$. 
\item $R$ is a Cohen-Macaulay ring of dimension one, analytically unramified, and satisfies $\fkm\subseteq R:\ol{R}$. 
\end{enumerate}
\end{enumerate}
In particular, if $\Tr(R)\subseteq \{0, \fkm, R\}$, then $R$ is a Cohen-Macaulay local ring having minimal multiplicity.
\end{thm}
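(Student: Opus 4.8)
The plan is to prove the two implications separately and to read off the closing ``in particular'' from (a)$\Rightarrow$(b). Throughout I would use Theorem \ref{thm1} to bound the dimension and Lemmas \ref{aa2.1} and \ref{aa2.2}, which identify the unique smallest nonzero (resp.\ smallest regular) trace ideal in dimensions $0$ and $1$.

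For (b)$\Rightarrow$(a): in case (i), $\fkm^2=0$ forces $\Soc R=(0):_R\fkm=\fkm$, so by Lemma \ref{aa2.1} every nonzero trace ideal contains $\fkm$, giving $\Tr(R)\subseteq\{0,\fkm,R\}$. In case (ii), analytic unramifiedness makes $\ol R$ a finite $R$-module, so Lemma \ref{aa2.2} applies and the conductor $R:\ol R$ is the smallest regular trace ideal; the hypothesis $\fkm\subseteq R:\ol R$ then pins every regular trace ideal into $\{\fkm,R\}$. Every remaining (non-regular) trace ideal is contained in a minimal prime, which is $0$ since $R$ is a domain, so again $\Tr(R)\subseteq\{0,\fkm,R\}$.

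For (a)$\Rightarrow$(b): finiteness of $\Tr(R)$ gives $\dim R\le 1$ by Theorem \ref{thm1}. If $\dim R=0$ then $R$ has depth $0$ and its socle is a nonzero trace ideal by Lemma \ref{aa2.1}; being different from $R$ it must equal $\fkm$, whence $\fkm=(0):_R\fkm$ and $\fkm^2=0$, i.e.\ case (i). If $\dim R=1$, the key step is to show $R$ is a domain: each minimal prime $\fkp$ has $\dim R_\fkp=0$, so $R_\fkp$ is not a discrete valuation ring and $\fkp$ is a trace ideal by Proposition \ref{proptrace}; since $\fkp\ne R$ and $\fkp\ne\fkm$ (otherwise $\dim R=0$), the hypothesis forces $\fkp=0$, so $R$ is a one-dimensional domain and hence Cohen--Macaulay. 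Lemma \ref{lem2} then makes $\ol R$ module-finite, so $R$ is analytically unramified, and the conductor $R:\ol R$, being a regular trace ideal (Lemma \ref{aa2.2}) different from $0$, lies in $\{\fkm,R\}$; thus $\fkm\subseteq R:\ol R$, which is case (ii). Finally, for the closing assertion: case (i) is minimal multiplicity for an Artinian ring by definition, while in case (ii), taking a minimal reduction $(a)$ of $\fkm$ with $a\notin\fkm^2$ (possible since $R/\fkm$ is infinite), the inclusion $\fkm\subseteq R:\ol R$ gives $\fkm^2\subseteq(a)$ and then $\fkm^2=a\fkm$, the minimal multiplicity condition.

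I expect the main obstacle to be the domain reduction in the one-dimensional case. The essential point is that Proposition \ref{proptrace} applies to \emph{non-maximal} primes, so that every minimal prime is itself a trace ideal; this is exactly what collapses all minimal primes (and hence the nilradical) to $0$, and it is the step that genuinely fails for non-domains such as $k[[x,y]]/(xy)$, where $(x)$ and $(y)$ are extra trace ideals. A secondary technical matter is the interchange between ``analytically unramified'' and ``$\ol R$ module-finite'': for a reduced one-dimensional local ring these are equivalent, which one sees by combining Nagata's finiteness theorem with the flat base change $\widehat{(-)}$ applied to $0\to R\to\ol R\to\ol R/R\to 0$, using that $\widehat{\ol R}$ is a finite product of complete discrete valuation rings and therefore reduced. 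Both directions of the proof invoke this equivalence.
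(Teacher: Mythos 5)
Your direction (a)$\Rightarrow$(b), together with the closing ``in particular'' assertion, is essentially sound, and it takes a slightly different route from the paper's: the paper obtains Cohen--Macaulayness by showing $\Soc R=0$ (a nonzero socle would be a nonzero trace ideal, hence $\fkm$ or $R$ by Lemma \ref{aa2.1}, contradicting $\dim R=1$) and then deduces minimal multiplicity from $\ol{R}\subseteq R:\fkm=\fkm:\fkm\subseteq\ol{R}$ by citing a known criterion, whereas you collapse every minimal prime to $0$ via Proposition \ref{proptrace} and conclude that $R$ is a \emph{domain}, getting Cohen--Macaulayness for free and minimal multiplicity from a minimal reduction. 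That argument works (just record the trivial subcase $\Soc R=R$, i.e.\ $R$ a field, in dimension $0$), and it in fact proves strictly more than condition (b)(ii) records.

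The genuine gap is in (b)$\Rightarrow$(a), case (ii). You dispose of the nonzero non-regular trace ideals by placing them inside a minimal prime ``which is $0$ since $R$ is a domain'' --- but being a domain is not among the hypotheses of (b)(ii), which only assumes that $R$ is one-dimensional Cohen--Macaulay, analytically unramified, and $\fkm\subseteq R:\ol{R}$. This cannot be patched with the statement as given: $R=k[[x,y]]/(xy)$ satisfies all three conditions (it is complete and reduced, hence analytically unramified, and a direct computation with $\ol{R}=k[[x]]\times k[[y]]$ gives $R:\ol{R}=\fkm$), yet $(x)$ and $(y)$ are trace ideals lying outside $\{0,\fkm,R\}$ --- precisely the example you yourself invoke when discussing the domain reduction. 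Lemma \ref{aa2.2}, which is all the paper cites for this direction, only constrains the \emph{regular} trace ideals, so the non-regular ones escape unless (ii) is strengthened to include irreducibility (equivalently, the domain conclusion that your own (a)$\Rightarrow$(b) argument extracts from (a)). You should flag this discrepancy explicitly rather than silently importing the domain hypothesis into case (ii).
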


\begin{proof}
(a) $\Rightarrow$ (b): Suppose that $\dim R=0$. Then, by Lemma \ref{aa2.1}, either $\Soc R=\fkm$ or $\Soc R=R$ holds. The former implies that $\fkm^2=0$ and the latter implies that $R$ is a field. 

Suppose that $\dim R=1$. Then $\Soc R$ is neither $\fkm$ nor $R$, thus $\Soc R=0$. Hence, since $\Hom_R(R/\fkm, R)=0$, $R$ is a Cohen-Macaulay ring. Furthermore, Lemma \ref{lem2} shows that $\ol{R}$ is finitely generated as an $R$-module, i.e., $R$ is analytically unramified. Therefore, by Lemma \ref{aa2.2}, either $R=R:\ol{R}$ or $\fkm=R:\ol{R}$ holds. The former says that $R=\ol{R}$, thus $R$ is a discrete valuation ring. In particular, $R$ has minimal multiplicity. The latter says that $\ol{R}\subseteq R:\fkm=\fkm:\fkm \subseteq \ol{R}$. Hence, $R$ also has minimal multiplicity by \cite[Theorem 5.1]{GMP}.

(b) $\Rightarrow$ (a): This follows by Lemmas \ref{aa2.1} and \ref{aa2.2}.
\end{proof}

Theorem \ref{aa3.3} generalizes \cite[Proposition 6.3 (1)]{DMS} and \cite[Proposition 3.5]{GIK2}. Note that the condition (b)(ii) in Theorem \ref{aa3.3} is equivalent to saying that $R$ is nearly Gorenstein and far-flung Gorenstein in the senses of  \cite[Definition 2.2]{HHS} and \cite[Definition 2.3]{HKS2} (see \cite[Remark 2.4]{HKS2}).

\section{Trace ideals in dimension one}\label{section4}

Let $(R, \fkm)$ be a Noetherian local ring of dimension $1$ having a finite number of trace ideals. Then, Example \ref{ex1} shows that $R$ is not necessarily Cohen-Macaulay. On the other hand, $R/H$ is a Cohen-Macaulay local ring whose regular trace ideals are finite, where $H$ denotes the zeroth local cohomology of $R$ (Proposition \ref{lem1}). With this perspective, we next investigate one-dimensional Cohen-Macaulay complete local rings having finite regular trace ideals. Note that this is the situation posed by Dao, Maitra, and Sridhar (\cite[Question 7.16(1)]{DMS}). Furthermore, since all nonzero ideals in one-dimensional Cohen-Macaulay local rings are maximal Cohen-Macaulay modules, this is also the one-dimensional case of Faber's question (\cite[Question 3.7]{F}).

First, we note that it is a necessary assumption that $R$ contains an infinite field to constrain the situation. 
Let $H$ be a numerical semigroup, that is, a submonoid of $\NN_0=\{0, 1, 2, \dots\}$ such that $\NN_0\setminus H$ is finite. Then 
\[
R=K[[H]]=K[[t^h : h\in H]]
\] 
is called the {\it numerical semigroup ring} of $H$, where $K[[t]]$ is the formal power series ring over a field $K$. For numerical semigroup rings, we have the same conclusion as Lemma \ref{aa2.2} without the assumption that the residue field $K$ is infinite:

\begin{fact}{\rm (cf. \cite[Proposition A.1]{HHS2}, \cite[Proposition 2.2]{HM})} \label{fact4.1}
Let $R$ be a numerical semigroup ring. Let $I$ be a nonzero trace ideal in $R$. Then $R:\ol{R} \subseteq I$.
\end{fact}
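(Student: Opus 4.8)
The plan is to run the argument of Lemma~\ref{aa2.2} almost verbatim, the only new input being that for a numerical semigroup ring the integral closure $\ol{R}=K[[t]]$ is a discrete valuation ring. This lets me produce a \emph{principal reduction} of any nonzero ideal by hand, using the $t$-adic valuation, and thereby drop the hypothesis that the residue field is infinite (which was exactly what Lemma~\ref{aa2.2} used to guarantee a principal reduction).

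First I would record the structural facts. Since $R=K[[H]]\subseteq K[[t]]$ is a subring of a domain, it is a one-dimensional Cohen--Macaulay domain, and $\ol{R}=K[[t]]$ is the valuation ring of $v:=\ord_t$ on $\rmQ(R)=K((t))$; concretely $\ol{R}=\{0\}\cup\{g\in \rmQ(R): v(g)\ge 0\}$. Because $R$ is a domain, the nonzero ideal $I$ is regular, so $\Hom_R(I,R)\cong R:I$, and, being a trace ideal, $I$ satisfies $I=\tr_R(I)=(R:I)\,I$.

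Next comes the key step, the construction of the reduction, together with the colon chase. I would choose $a\in I$ with $v(a)=\min\{v(f):0\ne f\in I\}$, which exists since value sets are bounded below. For every $f\in I$ one then has $v(f/a)=v(f)-v(a)\ge 0$, so $f/a\in\ol{R}$, giving $I/a\subseteq\ol{R}$; on the other hand $a\in I$ gives $aR\subseteq I$, i.e.\ $R\subseteq I/a$, so in particular $1\in I/a$. Writing $I=a\,(I/a)$ yields $R:I=\tfrac1a\bigl(R:\tfrac{I}{a}\bigr)$, whence
\[
I=(R:I)\,I=\Bigl(R:\tfrac{I}{a}\Bigr)\tfrac{I}{a}\ \supseteq\ R:\tfrac{I}{a}\ \supseteq\ R:\ol{R},
\]
where the first inclusion uses $1\in I/a$ and the second uses $I/a\subseteq\ol{R}$ together with the inclusion-reversing property of colon ideals. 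This is exactly $R:\ol{R}\subseteq I$.

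The only real point — the ``obstacle'' that Lemma~\ref{aa2.2} resolved via an infinite residue field — is the existence of a principal reduction $a$ with $R\subseteq I/a\subseteq\ol{R}$. Here it is immediate because $\ol{R}$ is a DVR: once the valuation $v$ is in place, any element of $I$ of minimal value does the job, and everything else is a formal manipulation of colon ideals. I would emphasize in the writeup that this is precisely where the numerical-semigroup hypothesis (more generally, analytic irreducibility, forcing $\ol{R}$ to be a valuation ring) enters.
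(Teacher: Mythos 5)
Your argument is correct. Note that the paper does not prove Fact~\ref{fact4.1} itself --- it is stated with citations to external sources --- but your proof is exactly the natural adaptation of the paper's own Lemma~\ref{aa2.2}: the colon-ideal chase $I=(R:I)I=(R:\tfrac{I}{a})\tfrac{I}{a}\supseteq R:\tfrac{I}{a}\supseteq R:\ol{R}$ is identical, and you correctly identify that the only role of the infinite residue field there was to produce an $a\in I$ with $R\subseteq \tfrac{I}{a}\subseteq\ol{R}$, which for $\ol{R}=K[[t]]$ a DVR is supplied by any element of minimal $t$-adic value in $I$.
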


\begin{cor}
Let $R$ be a numerical semigroup ring. If $K$ is a finite field, then $\Tr(R)(=\regTr(R)\cup \{0\})$ is finite. 
\end{cor}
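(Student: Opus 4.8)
The plan is to show that when $K$ is finite, the conductor $R:\ol{R}$ is nonzero (indeed $\fkm$-primary) and contains a power of $\fkm$, so that every nonzero trace ideal sits between $R:\ol{R}$ and $R$; finiteness then follows because there are only finitely many ideals in that range. First I would recall that a numerical semigroup ring $R=K[[H]]$ is a one-dimensional analytically irreducible Cohen-Macaulay local ring whose integral closure is $\ol{R}=K[[t]]$, which is module-finite over $R$ (since $\NN_0\setminus H$ is finite). Consequently the conductor $\fkc=R:\ol{R}$ is a nonzero, hence $\fkm$-primary, regular ideal of $R$; concretely $\fkc=(t^h : h\ge c)$ where $c$ is the conductor of the semigroup $H$.

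Next I would invoke Fact \ref{fact4.1}, which says that every nonzero trace ideal $I$ of $R$ satisfies $\fkc=R:\ol{R}\subseteq I$. Thus every element of $\regTr(R)$ (equivalently every nonzero trace ideal, since $R$ is Cohen-Macaulay of dimension one so every nonzero ideal is regular) lies in the interval $\fkc\subseteq I\subseteq R$, i.e.\ corresponds to an ideal of the quotient $R/\fkc$. Therefore it suffices to prove that $R/\fkc$ has only finitely many ideals, for which it is enough that $R/\fkc$ is a finite set.

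The key point where the finiteness of $K$ enters is the estimate $\ell_R(R/\fkc)<\infty$ together with finiteness of the residue field. Since $\fkc$ is $\fkm$-primary, $R/\fkc$ is an Artinian local ring of finite length over itself, and each composition factor is $R/\fkm\cong K$. When $K$ is finite, a finite-length module over $R$ with all composition factors equal to the finite field $K$ is a finite set: explicitly, $|R/\fkc|=|K|^{\ell_R(R/\fkc)}<\infty$. A finite ring has only finitely many ideals, so $R/\fkc$ has finitely many ideals, whence $R$ has finitely many nonzero trace ideals. Adding back the zero ideal and using $\Tr(R)=\regTr(R)\cup\{0\}$ (valid because in a one-dimensional Cohen-Macaulay ring the only non-regular ideal is $0$) gives that $\Tr(R)$ is finite.

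The main obstacle, such as it is, is not any deep argument but making sure the bookkeeping is airtight: one must confirm that $\fkc$ is genuinely nonzero and $\fkm$-primary (which rests on $\ol{R}$ being module-finite, itself a consequence of $H$ being a numerical semigroup), and one must be careful that finiteness of $\Tr(R)$ reduces cleanly to counting ideals in the \emph{finite} ring $R/\fkc$ rather than merely to the finite-length statement, since finite length alone does not bound the number of submodules when the residue field is infinite. This is precisely why the hypothesis that $K$ is finite is indispensable and where the whole argument turns.
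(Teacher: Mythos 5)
Your proof is correct and follows exactly the paper's argument: invoke Fact \ref{fact4.1} to place every nonzero trace ideal between the conductor $R:\ol{R}$ and $R$, then observe that $R/(R:\ol{R})$ is a finite set when $K$ is finite, so only finitely many such ideals exist. The additional bookkeeping you supply (module-finiteness of $\ol{R}$, the length estimate $|R/\fkc|=|K|^{\ell_R(R/\fkc)}$) is accurate and simply makes explicit what the paper leaves implicit.
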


\begin{proof}
Since $R/(R:\ol{R})$ is a finite set, there are only a finite number of ideals containing $R:\ol{R}$ in $R$. In particular, trace ideals are finite by Fact \ref{fact4.1}.
\end{proof}

In what follows, let $(R, \fkm)$ be a one-dimensional Cohen-Macaulay complete local ring and suppose that $R$ contains an infinite field $K$ as a subring (i.e., $R$ is a $K$-algebra). Suppose that $\ol{R}$ is finitely generated as an $R$-module. 
Then, $R$ has the canonical module $\omega_R$ and we can choose an $\fkm$-primary ideal $\omega$ in $R$ as the canonical module (see \cite[4.6 Theorem]{LW} and \cite[Proposition 3.3.18]{BH}). 
Because $R/\fkm$ is also an infinite field, there exists a non-zerodivisor $a\in \omega$ of $R$ such that $\omega^{n+1}=a\omega^n$ for some $n>0$. 
Furthermore, we have an isomorphism 
\[
\varphi: \ol{R} \xrightarrow{\ \ \cong\ \ } R_1\times R_2 \times \cdots \times R_\ell
\]
of rings, where $R_i$ is a discrete valuation ring for $1\le i\le \ell$. Indeed, since complete local rings are Henselian, $\ol{R}$ is a product of local rings $R_i$ (\cite[1.9 Corollary and A.30 Theorem]{LW}). Because $R_i$ are obtained by the localization at each maximal ideal of $\ol{R}$, $R_i$ are also integrally closed. It follows that all $R_i$ are discrete valuation rings.

Let 
\[
\varphi_i: \ol{R} \xrightarrow{\ \ \varphi \ \ } R_1\times R_2 \times \cdots \times R_\ell \xrightarrow{\ \ \pi_i\ \ } R_i
\] 
for $1 \le i \le \ell$, where $\pi_i$ is a canonical surjection. For an integer $1\le i \le \ell$ and a nonzero element $a\in R_i$, we say that 
\[
v_i(a) = \ell_{R_i} (R_i/aR_i)
\]
is the {\it value} of $a$ in $R_i$. For a finitely generated $R$-submodule $A$ of $\ol{R}$, we call a set
 \[
v_i(A) = \{v_i(\varphi_i(x)) \ \mid \ x\in A\}
\]
of $\NN_0$ the {\it value set} of $A$ with respect to $R_i$. Note that $v_i(A)$ becomes a semigroup if $A$ is a {\it birational extension}, that is, $A$ is an intermediate ring between $R$ and $\ol{R}$ such that $A$ is finitely generated as an $R$-module.

\begin{prop}\label{keysec4} 
In addition to the above assumptions and notation, let $\fkn_i=(f_i)$ denote the maximal ideal of $R_i$ for $1\le i \le \ell$. Choose $t_i\in \ol{R}$ such that $\varphi(t_i)=(0, \dots, 0, f_i, 0, \dots, 0)$. For an integer $n\ge 2$, assume that $1, n, n+1\not\in v_i(\tfrac{\omega}{a})$. 
Then, for elements $k$ and $k'$ in $K$, the following are equivalent:\\
{\rm (a)} \ $R:R[t_i^n+kt_i^{n+1}]=R:R[t_i^n+k't_i^{n+1}]$. \quad \quad
{\rm (b)} \ $k=k'$.
\end{prop}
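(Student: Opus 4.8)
The implication (b) $\Rightarrow$ (a) is immediate, since $k=k'$ produces the same ring. The plan is to prove (a) $\Rightarrow$ (b) by contraposition: writing $\alpha_k=t_i^n+kt_i^{n+1}$, I assume $k\ne k'$ and aim to show $R:R[\alpha_k]\ne R:R[\alpha_{k'}]$. First I reduce to a single comparison independent of $k,k'$. Suppose for contradiction that $\fkc:=R:R[\alpha_k]=R:R[\alpha_{k'}]$. Since $\fkc$ is an ideal of each of $R[\alpha_k]$ and $R[\alpha_{k'}]$, it is stable under multiplication by both $\alpha_k$ and $\alpha_{k'}$, hence it is an ideal of the compositum $B:=R[\alpha_k,\alpha_{k'}]$; combined with $R:B\subseteq R:R[\alpha_k]=\fkc$ this forces $\fkc=R:B$. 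As $\alpha_k-\alpha_{k'}=(k-k')t_i^{n+1}$ and $k\ne k'$, the ring $B$ contains $t_i^{n+1}$ and therefore $t_i^n$, so $B=R[t_i^n,t_i^{n+1}]$. Thus it suffices to prove the strict inclusion $R:B\subsetneq R:R[\alpha_k]$ for every $k\in K$, which contradicts $\fkc=R:B$.

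To compare these conductors I pass to the canonical module. Since $R$ is one-dimensional Cohen--Macaulay with canonical ideal $\omega$, one has $\omega:\omega=R$, so for any fractional ideal $I$,
\[
R:I=(\omega:\omega):I=\omega:(I\omega).
\]
Moreover $\omega:(-)$ is an inclusion-reversing duality on fractional ideals (all of which are reflexive with respect to $\omega$), so $R:I_1=R:I_2$ iff $I_1\omega=I_2\omega$, and strict inclusions are reversed strictly. Hence the desired $R:B\subsetneq R:R[\alpha_k]$ is equivalent to $R[\alpha_k]\omega\subsetneq B\omega$, and, dividing by $a$, to $R[\alpha_k]\tilde\omega\subsetneq B\tilde\omega$, where $\tilde\omega=\omega/a$ is a fractional ideal between $R$ and $\ol R$ (using $a\in\omega$ and $\omega^{n+1}=a\omega^n$).

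Now I project to the $i$-th component. Applying the ring homomorphism $\varphi_i\colon\ol R\to R_i$ and writing $\Omega=\varphi_i(\tilde\omega)\subseteq R_i$ and $\beta=\varphi_i(\alpha_k)=f_i^n(1+kf_i)$, the two modules become
\[
\varphi_i(R[\alpha_k]\tilde\omega)=\textstyle\sum_{m\ge0}\beta^m\Omega=:G,\qquad \varphi_i(B\tilde\omega)=\textstyle\sum_{m\in\langle n,n+1\rangle}f_i^m\Omega=:G'.
\]
Since $R[\alpha_k]\subseteq B$ gives $G\subseteq G'$, and since $f_i^{n+1}=\varphi_i(t_i^{n+1})\in G'$ (as $1\in\Omega$), it is enough to show $f_i^{n+1}\notin G$: this yields $G\subsetneq G'$, hence $R[\alpha_k]\tilde\omega\subsetneq B\tilde\omega$.

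The \emph{main obstacle} is precisely $f_i^{n+1}\notin G$, and this is where the hypothesis $1,n,n+1\notin v_i(\omega/a)=\ord(\Omega)$ is used, and where one must remember that $\Omega$ is only a module over $\varphi_i(R)$, so the unit $1+kf_i$ does not cancel against $\Omega$. Suppose $f_i^{n+1}=\sum_{j\ge0}\beta^j\omega_j$ with $\omega_j\in\Omega$. Each nonzero $\omega_j$ has $\ord(\omega_j)\in\ord(\Omega)$, a set containing $0$ and avoiding $1,n,n+1$, while $\ord(\beta^j\omega_j)=jn+\ord(\omega_j)$. I compare $f_i$-adic coefficients: if $\ord(\omega_0)\le n-1$, then the coefficient of $f_i^{\ord(\omega_0)}$ vanishes on the left but is nonzero on the right, because every $j\ge1$ term has order $\ge n>\ord(\omega_0)$; if instead $\omega_0=0$ or $\ord(\omega_0)\ge n+2$, then either $\omega_1$ is a unit, forcing a nonzero coefficient of $f_i^n$ on the right against $0$ on the left, or every term has order $\ge n+2$, contradicting that the sum has order $n+1$. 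In all cases one reaches a contradiction, so $f_i^{n+1}\notin G$. The three excluded values $1,n,n+1$ are exactly what forbids any term of order $n+1$ and separates these cases; the bookkeeping is delicate only because each $\omega_j$ is a full power series rather than a single homogeneous term.
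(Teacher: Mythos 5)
Your proposal is correct, and while it runs on the same engine as the paper's proof---canonical duality together with a valuation comparison in $R_i$ that exploits $1,n,n+1\notin v_i(\tfrac{\omega}{a})$---it organizes the argument genuinely differently. The paper dualizes the hypothesis $R:R[t_i^n+kt_i^{n+1}]=R:R[t_i^n+k't_i^{n+1}]$ directly into the equality of algebras $\tfrac{\omega}{a}[t_i^n+kt_i^{n+1}]=\tfrac{\omega}{a}[t_i^n+k't_i^{n+1}]$, writes $t_i^n+kt_i^{n+1}=c_0+c_1(t_i^n+k't_i^{n+1})+g$ with $c_0,c_1\in\tfrac{\omega}{a}$ and $v_i(\varphi_i(g))\ge n+2$, and then uses the excluded values to normalize $\varphi_i(c_0)=0$ and $\varphi_i(c_1)=1$ before concluding $k=k'$. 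You instead pass to the compositum $B=R[\alpha_k,\alpha_{k'}]=R[t_i^n,t_i^{n+1}]$, note that a common conductor would be forced to equal $R:B$, and reduce everything to the single non-membership statement $f_i^{n+1}\notin\varphi_i\bigl(R[\alpha_k]\tfrac{\omega}{a}\bigr)$, which you verify by a three-case valuation analysis that uses the hypothesis in exactly the same way. Your route has the advantage of eliminating $k'$ from the core computation and isolating the role of the value set in one clean claim; the cost is the extra layer about strictness under $\omega$-duality and the projection $\varphi_i$, both of which you justify correctly. Two cosmetic points: your reduction steps (that $\fkc$ is an ideal of $B$, and that $G\subsetneq G'$ forces $R[\alpha_k]\tfrac{\omega}{a}\subsetneq B\tfrac{\omega}{a}$) are fine, and the ``$f_i$-adic coefficient'' language can be replaced throughout by the valuation identity $v_i(x+y)=\min\{v_i(x),v_i(y)\}$ when $v_i(x)\ne v_i(y)$, avoiding any appeal to a coefficient field for $R_i$.
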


\begin{proof}
The proof proceeds in a similar way as \cite[Theorem 2.15]{HM}, but we include a proof for the sake of completeness.
It is enough to prove (a) $\Rightarrow$ (b). Suppose that $R:R[t_i^n+kt_i^{n+1}]=R:R[t_i^n+k't_i^{n+1}]$. 
Note that 
\[
R:R[t_i^n+kt_i^{n+1}]=(\tfrac{\omega}{a}:\tfrac{\omega}{a}):R[t_i^n+kt_i^{n+1}]=\tfrac{\omega}{a}:\tfrac{\omega}{a}[t_i^n+kt_i^{n+1}].
\]
Hence, by applying the canonical dual $\tfrac{\omega}{a}: -\cong \Hom_R(-, \omega_R)$ to $R:R[t_i^n+kt_i^{n+1}]=R:R[t_i^n+k't_i^{n+1}]$, we obtain that $\tfrac{\omega}{a}[t_i^n+kt_i^{n+1}]=\tfrac{\omega}{a}[t_i^n+k't_i^{n+1}]$. Write 
\[
t_i^n+kt_i^{n+1}=c_0 + c_1(t_i^n+k't_i^{n+1}) + g,
\]
where $c_0, c_1\in \tfrac{\omega}{a}$ and $g\in \ol{R}$ with $v_i(\varphi_i(g)) \ge n+2$. 
By mapping $\varphi_i(-)$, we obtain that 
\begin{align}\label{eqaaa}
f_i^n+\varphi_i(k) f_i^{n+1}=\varphi_i(c_0) + \varphi_i(c_1)[f_i^n+\varphi_i(k')f_i^{n+1}] + \varphi_i(g).
\end{align}
Thus,
\[
\varphi_i(c_0) =f_i^n+\varphi_i(k) f_i^{n+1} - \varphi_i(c_1)[f_i^n+\varphi_i(k')f_i^{n+1}] - \varphi_i(g);
\]
hence, we have either $\varphi_i(c_0)=0$ or $v_i(\varphi_i(c_0))\ge n$. 
Assume that $v_i(\varphi_i(c_0))\ge n$. Since $c_0\in \tfrac{\omega}{a}$, we have $v_i(\varphi_i(c_0))\ne n, n+1$ by the assumption. Hence, $v_i(\varphi_i(c_0)) \ge n+2$. It follows that we can replace $g$ with $g+c_0$ preserving the condition $v_i(\varphi_i(g)) \ge n+2$. Thus, we may assume that $\varphi_i(c_0)=0$. Then, \eqref{eqaaa} can be rewritten as
\[
(1-\varphi_i(c_1))f_i^n= [\varphi_i(c_1)\varphi_i(k')- \varphi_i(k)]f_i^{n+1} + \varphi_i(g).
\]
Either $1-\varphi_i(c_1)=0$ or $v_i(1-\varphi_i(c_1))\ge 1$ holds. By noting that $\varphi_i(1)=1$, we obtain that $v_i(1-\varphi_i(c_1))\ge 2$ because $v_i(1-\varphi_i(c_1))=v_i(\varphi_i(1-c_1))$, $1-c_1\in \tfrac{\omega}{a}$, and $1\not\in v_i(\tfrac{\omega}{a})$. Therefore, by noting that \eqref{eqaaa} can be rewritten as
\[
f_i^n+\varphi_i(k) f_i^{n+1}= (f_i^n+\varphi_i(k')f_i^{n+1}) - [1 - \varphi_i(c_1)][f_i^n+\varphi_i(k')f_i^{n+1}] + \varphi_i(g), 
\]
we can replace $g$ with $g-(1-c_1)(t_i^n+k't_i^{n+1})$. Hence, we can further assume that $\varphi_i(c_1)=1$ in \eqref{eqaaa}. 
Then, we have either $\varphi_i(k)=\varphi_i(k')$ or $v_i(\varphi_i(k)- \varphi_i(k'))>0$. Assume that $k\ne k'$.
By noting that the restriction $\varphi_i|_K: K \to R_i$ of a ring homomorphism $\varphi_i$ is injective, we obtain that $v_i(\varphi_i(k)- \varphi_i(k'))>0$. Since $k, k'\in K$, we have $k-k'\in K$. Hence, $k-k'\ne 0$ is a unit of $R$. It follows that $v_i(\varphi_i(k-k'))=0$. This is a contradiction. Therefore, $k=k'$ as desired.
\end{proof}

Now, we obtain a necessary condition for $R$ to have finite trace ideals. By noting that $R=\tfrac{\omega}{a}$ if and only if $R$ is Gorenstein, our result generalizes one direction of \cite[Corollary 2.16]{HM}.

\begin{thm}\label{mainthm2}
Let $(R, \fkm)$ be a one-dimensional Cohen-Macaulay complete local ring containing an infinite field $K$ as a subring. Suppose that $\regTr(R)$ is finite. Then, we have the following:
\begin{enumerate}[{\rm (a)}] 
\item $\ol{R}$ is finitely generated as an $R$-module.
\item There exist an $\fkm$-primary ideal $\omega$ and $a\in \omega$ such that $\omega\cong \omega_R$ and $\omega^{n+1}=a\omega^n$.
\item Let $\ol{R}\cong R_1\times \cdots \times R_\ell$, where $R_i$ are discrete valuation rings. Then, for each $1\le i \le \ell$, the value set of $\tfrac{\omega}{a}$ with respect to $R_i$ satisfies one of the following:
\begin{enumerate}[{\rm (i)}] 
\item $1\in v_i(\tfrac{\omega}{a})$.
\item $1\not\in v_i(\tfrac{\omega}{a})$ and for all $n\ge 2$, $n\in v_i(\tfrac{\omega}{a})$ or $n+1\in v_i(\tfrac{\omega}{a})$ holds.
\end{enumerate}
\end{enumerate}
\end{thm}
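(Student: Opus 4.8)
The plan is to prove parts (a) and (b) by invoking the earlier results already assembled, and then to prove the main content (c) by contraposition: I will assume that for some index $i$ neither (i) nor (ii) holds and produce infinitely many distinct regular trace ideals, contradicting the hypothesis that $\regTr(R)$ is finite. For (a), the finiteness of $\regTr(R)$ combined with Lemma \ref{lem2} immediately gives that $\ol{R}$ is module-finite over $R$. For (b), once $\ol{R}$ is module-finite, the ring $R$ is analytically unramified, so by the standard theory (\cite[4.6 Theorem]{LW}, \cite[Proposition 3.3.18]{BH}) there is an $\fkm$-primary ideal $\omega\cong\omega_R$, and since $R/\fkm=K$ is infinite, reduction theory supplies a non-zerodivisor $a\in\omega$ with $\omega^{n+1}=a\omega^n$ for some $n$; this is exactly the setup established in the paragraphs preceding Proposition \ref{keysec4}, so (a) and (b) require only citation.

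For (c), suppose toward a contradiction that there is an index $i$ for which (i) fails, so $1\notin v_i(\tfrac{\omega}{a})$, and (ii) also fails, so there exists some $n\ge 2$ with $n\notin v_i(\tfrac{\omega}{a})$ and $n+1\notin v_i(\tfrac{\omega}{a})$. Then the three values $1,n,n+1$ all lie outside $v_i(\tfrac{\omega}{a})$, which is precisely the hypothesis of Proposition \ref{keysec4}. With $t_i$ chosen as in that proposition, I would form the family of birational extensions $R[t_i^n+kt_i^{n+1}]$ indexed by $k\in K$, and consider the associated ideals $R:R[t_i^n+kt_i^{n+1}]$. Each such ideal is a regular trace ideal: it equals $\tr_R(R[t_i^n+kt_i^{n+1}])$ by the same computation used in the proof of Lemma \ref{lem2} (namely $I=(R:R[\alpha])R[\alpha]=\tr_R(R[\alpha])$ when $\alpha\in\ol R$), and it contains a non-zerodivisor because $\ol{R}$ is module-finite, so the conductor-type ideal is regular.

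The key step is then Proposition \ref{keysec4}, which under exactly these hypotheses gives that distinct $k\in K$ yield distinct ideals $R:R[t_i^n+kt_i^{n+1}]$. Since $K$ is infinite, this produces infinitely many distinct regular trace ideals in $R$, contradicting the assumption that $\regTr(R)$ is finite. Hence no such index $i$ can exist, and every $i$ satisfies (i) or (ii), proving (c).

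I expect the main obstacle to be verifying carefully that $R[t_i^n+kt_i^{n+1}]$ is genuinely an intermediate ring between $R$ and $\ol{R}$ (so that the colon/trace identity applies and the resulting ideal is regular), and confirming that the hypotheses $1,n,n+1\notin v_i(\tfrac{\omega}{a})$ are the precise input required by Proposition \ref{keysec4}; the injectivity of $k\mapsto R:R[t_i^n+kt_i^{n+1}]$ is then handed to us by that proposition, so the burden is entirely in setting up the family correctly rather than in any new computation.
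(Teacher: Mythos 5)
Your proposal is correct and follows essentially the same route as the paper: (a) from Lemma \ref{lem2}, (b) from the existence of a canonical ideal for the reduced (analytically unramified) ring plus reduction theory over the infinite residue field, and (c) by observing that $R:A=(R:A)A=\tr_R(A)$ is a regular trace ideal for every birational extension $A$ and then invoking Proposition \ref{keysec4} to produce a $K$-indexed family of distinct such ideals whenever $1,n,n+1$ all avoid $v_i(\tfrac{\omega}{a})$. The paper states this last step directly rather than as an explicit contraposition, but the logic is identical.
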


\begin{proof} 
(a) follows from Lemma \ref{lem2}. By noting that (a) is equivalent to saying that $R$ is reduced, there exists a canonical ideal (see \cite[4.6 Theorem]{LW} and \cite[Proposition 3.3.18]{BH}). This proves (b) because $R/\fkm$ is infinite.

(c): Note that for any birational extension $A$, $R:A$ is a trace ideal because $R:A=(R:~A)A=\tr_R(A)$. Hence, $\regTr(R)$ includes the set 
\[
\left\{ R:A\ \mid \ \text{$A$ is a birational extension} \right\}.
\]
Hence, for each $1\le i \le \ell$, $v_i(\tfrac{\omega}{a})$ must contain either $1, n$, or $n+1$ for all $n\ge 2$ by Proposition \ref{keysec4}. This completes the proof.
\end{proof}

Theorem \ref{mainthm2} provides the finiteness of indecomposable maximal Cohen-Macaulay $\varphi_i(R[\frac{\omega}{a}])$-modules.

\begin{cor}
With the same assumptions and notation of {\rm Theorem \ref{mainthm2}}, $\varphi_i(R[\frac{\omega}{a}])$ has finite indecomposable maximal Cohen-Macaulay modules for all $1\le i \le \ell$.
\end{cor}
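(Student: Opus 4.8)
The plan is to realize $C_i := \varphi_i(R[\tfrac{\omega}{a}])$ as a one-dimensional complete local \emph{domain} whose normalization is the discrete valuation ring $R_i$, and then to deduce finite Cohen--Macaulay type directly from the constraint that Theorem \ref{mainthm2}(c) places on its value semigroup. First I would check that $C_i$ has the right shape. The ring $B := R[\tfrac{\omega}{a}]$ is a birational extension, hence module-finite over the complete local ring $R$, so $B$ is complete and semilocal; its quotient $C_i = \varphi_i(B)$ is therefore complete, and it is a domain because $C_i \subseteq R_i$. A complete semilocal domain is local, so $C_i$ is a complete local domain containing $K$. Since $\ol{B} = \ol{R}$ and $\varphi_i$ is surjective, $\varphi_i(\ol{B}) = R_i$; as $R_i$ is integral over $C_i$, integrally closed, and shares the fraction field of $C_i$, we get $R_i = \ol{C_i}$, module-finite over $C_i$. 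Thus $C_i$ is a reduced (indeed analytically irreducible) one-dimensional complete local ring to which the finite-Cohen--Macaulay-type criteria apply.

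Next I would transfer condition (c) to $C_i$. Let $H_i := v_i(C_i) = \{v_i(c) : 0 \ne c \in C_i\}$, a numerical semigroup since $R_i$ is module-finite over $C_i$. Because $1 \in \tfrac{\omega}{a}$ and $R \subseteq \tfrac{\omega}{a}$, we have $B = \bigcup_k (\tfrac{\omega}{a})^k$, whence $v_i(\tfrac{\omega}{a}) \subseteq H_i$. If condition (c)(i) holds then $1 \in v_i(\tfrac{\omega}{a}) \subseteq H_i$, forcing $H_i = \NN_0$, i.e. $C_i = R_i$ is a discrete valuation ring, trivially of finite Cohen--Macaulay type. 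In case (c)(ii) the inclusion gives, for every $n \ge 2$, that $n \in H_i$ or $n+1 \in H_i$. Should $1 \in H_i$, again $C_i = R_i$; otherwise the multiplicity $e := \min(H_i \setminus \{0\})$ satisfies $e \ge 2$, and taking $n = 2$ shows $e \le 3$ (if $e \ge 4$ then $2,3 \notin H_i$, a contradiction). Hence $e \in \{2,3\}$, and the gaps of $H_i$ contain no two consecutive integers $\ge 2$.

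Finally I would invoke the Drozd--Roiter finite-Cohen--Macaulay-type criterion for reduced complete one-dimensional local rings (\cite{LW}). For $e = 2$ the ring $C_i$ has embedding dimension at most its multiplicity, so it is a hypersurface and is automatically of finite Cohen--Macaulay type. For $e = 3$ the two required conditions---a bound on the number of generators of $\ol{C_i}$ over $C_i$ (equivalently $\mu_{C_i}(\ol{C_i}/C_i) \le 2$), and the cyclicity of $(\fkm_{C_i}\ol{C_i} + C_i)/C_i$ as a $C_i$-module---follow respectively from $e \le 3$ and from the absence of two consecutive gaps $\ge 2$ in $H_i$; so $C_i$ again has finite Cohen--Macaulay type. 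Combining the cases proves that $\varphi_i(R[\tfrac{\omega}{a}]) = C_i$ has finitely many indecomposable maximal Cohen--Macaulay modules. The delicate step is precisely this last verification for $e = 3$: while the multiplicity bound is immediate, matching the combinatorial statement ``no two consecutive gaps $\ge 2$'' with the exact Drozd--Roiter cyclicity condition requires a careful $C_i$-module generator count for $(\fkm_{C_i}\ol{C_i} + C_i)/C_i$, and one must use this semigroup-level criterion---valid over the merely infinite field $K$---rather than the ADE classification, which would require $K$ algebraically closed.
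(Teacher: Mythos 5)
Your overall strategy coincides with the paper's: both arguments reduce the statement to the value semigroup $H_i$ of $C_i=\varphi_i(R[\frac{\omega}{a}])$ inside $R_i$, observe that $H_i\supseteq v_i(\frac{\omega}{a})$ and hence inherits condition (c) of Theorem \ref{mainthm2}, and conclude via the dimension-one finite-CM-type classification. The paper simply enumerates the numerical semigroups satisfying ``$1\in H_i$, or for every $n\ge 2$ either $n\in H_i$ or $n+1\in H_i$'' --- namely $\langle 1\rangle$, $\langle 2,2s-1\rangle$, $\langle 3,4\rangle$, $\langle 3,5\rangle$, $\langle 3,4,5\rangle$, $\langle 3,5,7\rangle$ --- and cites \cite[4.10 Theorem]{LW}. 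Your preparatory steps (that $C_i$ is a complete local domain with $\ol{C_i}=R_i$ module-finite, the bound $e\le 3$, and the absence of two consecutive gaps $\ge 2$) are correct and agree with what the paper uses.

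The gap is in your final step, where you replace that citation by a direct Drozd--Roiter verification. First, your treatment of the case $e=2$ rests on the principle ``one-dimensional hypersurface $\Rightarrow$ finite CM type,'' which is false: $K[[t^3,t^7]]\cong K[[x,y]]/(y^3-x^7)$ is a complete local hypersurface domain of infinite CM type (its value semigroup $\langle 3,7\rangle$ has the consecutive gaps $4,5$). The case $e=2$ does go through, but only by checking the two Drozd--Roiter conditions directly (an element of value $3$ generates $(\fkm_{C_i}\ol{C_i}+C_i)/C_i$), not by appealing to hypersurfaceness. Second, for $e=3$ you assert that ``no two consecutive gaps $\ge 2$'' yields the cyclicity of $(\fkm_{C_i}\ol{C_i}+C_i)/C_i$, but you explicitly defer the generator count that would establish this; since that cyclicity is not in general a function of the value semigroup alone (one must exhibit a single element of $C_i$ whose image generates the module, using that $4\in H_i$ or $5\in H_i$, and keep track of residue fields --- the same issue that makes your claim ``$1\in H_i$ forces $C_i=R_i$'' literally false for rings such as $\RR+t\,\CC[[t]]$, though harmless for the conclusion), this deferred step is precisely the missing content. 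The shortest repair is the paper's: the gap condition pins $H_i$ down to the explicit finite list above, which is exactly the list of value semigroups of finite CM type in \cite[4.10 Theorem]{LW}.
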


\begin{proof} 
Since $R[\frac{\omega}{a}]$ is a birational extension, $\varphi_i(R[\frac{\omega}{a}])$ is a numerical semigroup (note that the ring $R[\frac{\omega}{a}]$ is independent of the choice of $\omega$ and $a$ by \cite[Theorem 2.5(3)]{CGKM} or \cite[Proposition 2.4(b)]{K}). Hence, the condition (ii) of Theorem \ref{mainthm2}(c) proves that for all $1\le i \le \ell$, $v_i(R[\frac{\omega}{a}])$ is either 
\begin{center}
$\langle 1\rangle$, $\langle 2, 2s-1\rangle$, $\langle 3, 4\rangle$, $\langle 3, 5\rangle$, $\langle 3, 5, 7\rangle$, or $\langle 3, 5, 7\rangle$, 
\end{center}
where $s$ is a positive integer. This proves that $R[\frac{\omega}{a}]$ has finite indecomposable maximal Cohen-Macaulay modules by \cite[4.10 Theorem]{LW}.
\end{proof}

Let us note an observation to give examples regarding Theorem \ref{mainthm2}. 

\begin{obs}
Let $R=K[[H]]$ be a numerical semigroup ring of $H$, and let $I$ be an ideal in $R$ such that $R:\ol{R}\subseteq I$. Then we may compute a system of generators of $R:I$ in the following way:

Set $I=R:\ol{R}+\langle f_1, f_2, \dots, f_\ell \rangle$ and write $f_i=\sum_{j\ge 0} a_{ij} t^j$ for $1\le i \le \ell$, where $a_{ij}\in K$. Then 
\[
R:I=R:(R:\ol{R})\cap \left( \bigcap_{i=1}^\ell R:f_i\right) = \ol{R}\cap \left( \bigcap_{i=1}^\ell R:f_i\right).
\]
Hence, for an element $g\in \rmQ(R)$, $g\in R:I$ if and only if 
\begin{center}
$\displaystyle g= \sum_{j\ge 0} b_j t^j \quad (b_j\in K)$ \quad such that \quad \\
  $\displaystyle gf_i=\sum_{j\ge 0} (a_{i0}b_j + a_{i1}b_{j-1}+\cdots +a_{ij}b_0) t^j\in R$ for all $1\le i \le \ell$. 
\end{center}
Since $R$ is a numerical semigroup ring, the condition $gf_i\in R$ can be checked by $a_{i0}b_j + a_{i1}b_{j-1}+\cdots +a_{ij}b_0=0$ for all $j\not\in H$. Because $\NN_0\setminus H$ is finite, we may compute a system of generators of $R:I$.
\end{obs}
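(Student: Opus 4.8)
The plan is to peel $R:I$ apart into a finite intersection of colon ideals, to identify the factor coming from the conductor with $\ol{R}$, and then to turn the remaining membership conditions into a finite system of linear equations over $K$.

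First I would record the formal reduction. Writing $\fkc = R:\ol{R}$ for the conductor, the hypothesis gives $I = \fkc + Rf_1 + \cdots + Rf_\ell$, so the elementary identities $R:(A+B) = (R:A)\cap(R:B)$ and $R:Rf_i = R:f_i$ for colon ideals inside $\rmQ(R)$ yield
\[
R:I = (R:\fkc) \cap \bigcap_{i=1}^\ell (R:f_i).
\]
This step is purely formal.

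The one genuinely algebraic input is the reflexivity of the conductor, namely $R:\fkc = \ol{R}$. I would prove it directly: $\ol{R}\subseteq R:\fkc$ is clear because $\ol{R}\,\fkc = \fkc \subseteq R$; conversely, if $g\in R:\fkc$ then $g\fkc$ is again an $\ol{R}$-submodule of $R$ (as $\fkc$ is an ideal of $\ol{R}$), hence $g\fkc\subseteq\fkc$ by maximality of the conductor, and since $\ol{R}=K[[t]]$ is integrally closed and $\fkc$ is a regular fractional $\ol{R}$-ideal we get $g\in \fkc:\fkc = \ol{R}$. Substituting this into the displayed intersection shows that $g\in R:I$ forces $g\in\ol{R}$, i.e. $g=\sum_{j\ge 0} b_j t^j$ with $b_j\in K$, and that the remaining constraints are exactly $gf_i\in R$ for $1\le i\le \ell$.

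Finally I would make the conditions explicit and observe that they are finite. Since $g,f_i\in\ol{R}=K[[t]]$, the product is given by the Cauchy formula, so the coefficient of $t^j$ in $gf_i$ is $a_{i0}b_j+a_{i1}b_{j-1}+\cdots+a_{ij}b_0$; as $R=K[[H]]$ is precisely the set of power series supported on $H$, the condition $gf_i\in R$ is equivalent to the vanishing of this coefficient for every $j\notin H$. The point that makes the recipe effective --- and the only place the semigroup structure is used --- is that $\NN_0\setminus H$ is finite: writing $c$ for the conductor of $H$, each $j\notin H$ satisfies $j<c$, so the equations constrain only the finitely many coefficients $b_0,\dots,b_{c-1}$, while the $b_j$ with $j\ge c$ stay free (reflecting $\fkc\subseteq R:I$). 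Solving this finite linear system over $K$ then produces an explicit generating set for $R:I$, which is exactly the assertion. I expect the only delicate point to be the justification of $R:\fkc=\ol{R}$; everything else is bookkeeping with the Cauchy product and the finiteness of $\NN_0\setminus H$.
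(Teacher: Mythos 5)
Your argument is correct and follows essentially the same route as the paper's Observation: split $R:I$ over the sum $I=(R:\ol{R})+\langle f_1,\dots,f_\ell\rangle$, identify $R:(R:\ol{R})$ with $\ol{R}$, and reduce the remaining conditions $gf_i\in R$ to finitely many linear equations on $b_0,\dots,b_{c-1}$ via the Cauchy product and the finiteness of $\NN_0\setminus H$. The only difference is that you supply a justification of $R:(R:\ol{R})=\ol{R}$ (via maximality of the conductor as an $\ol{R}$-ideal inside $R$ and $\fkc:\fkc=\ol{R}$ in the DVR $K[[t]]$), which the paper simply asserts; your justification is correct.
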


\begin{ex}\label{ex4.6}
Let $R=K[[H]]$ be a numerical semigroup ring of $H$ over a field $K$. Let $\fkm$ be the maximal ideal of $R$. Set $\fkc=R:\ol{R}$.
\begin{enumerate}[{\rm (a)}] 
\item Let $H=\langle 4, 5, 11\rangle$. Then $\tfrac{\omega}{a}=\langle 1, t\rangle$ and 
\[
\Tr(R)=\{0, \fkc, \fkc+(t^5), \fkm=\fkc+(t^4, t^5), R\}.
\]
\item Let $H=\langle 4, 6, 9, 11\rangle$. Then $\tfrac{\omega}{a}=\langle 1, t^2, t^5\rangle$ and 
\[
\Tr(R)=\{0, \fkc, \fkc+(t^6), \fkm=\fkc+(t^4, t^6), R\}.
\]
\item Let $H=\langle 4, 5, 7\rangle$. Then $\tfrac{\omega}{a}=\langle 1, t^3\rangle$ and 
\[
\Tr(R)=\{0, \fkc, \fkc+(t^5), \fkm=\fkc+(t^4, t^5), R\}.
\]
\end{enumerate}
\end{ex}

\begin{proof}
(a): By \cite[Example (2.1.9)]{GW}, we have $\omega_R\cong \langle 1, t\rangle$. $\Tr(R)\supseteq \{0, \fkc, \fkm, R\}$ is clear. $\fkc+(t^5) \in \Tr(R)$ follows from the fact that $\fkc+(t^5)=t^5\ol{R} \cap R$ is an integrally closed ideal containing the conductor of $\ol{R}$ (see \cite[Proposition 2.6]{HM}).

Let $I\in \Tr(R)$ such that $I\not\in \{0, \fkc, \fkc+(t^5), \fkm, R\}$. Then, because $\fkc\subsetneq I\subsetneq \fkm$ and $\ell_R(\fkm/\fkc)=2$, $I=\fkc+(at^4+bt^5)$ for some $a, b\in K$. We may assume that $a=1$. Then, by Observation, we obtain that $g\in R:I$ if and only if $g=\sum_{j\ge 0} a_j t^j$ ($a_j\in K$) such that $ba_1+a_2=0$ and $ba_2+a_3=0$. Hence, 
\[
R:I=\langle 1, t-bt^2+b^2t^3, t^4, t^5, t^6, t^7\rangle.
\] 
It follows that $(t-bt^2+b^2t^3)I\subseteq (R:I)I=I$. In particular, $t^5+b^3t^8=(t-bt^2+b^2t^3)(t^4+bt^5)\in I$. This implies that $t^5\in I$ since $t^8\in \fkc\subseteq I$. Therefore, $t^4=(t^4+bt^5)-bt^5\in I$. It follows that $I=\fkm$, which is a contradiction. 

(b) and (c): They proceed in the same way as (a).
\end{proof}

\begin{rem}
We have no example such that the value set of $\tfrac{\omega}{a}$ satisfies one of Theorem \ref{mainthm2}(c)(i)-(ii) but $R$ has infinite trace ideals. 
\end{rem}

\section{The correspondence between $\Tr(R)\setminus \{R\}$ and $\Tr(\Hom_R(\fkm, \fkm))$}\label{section4.5}

In this section, we focus on the case that $(R, \fkm)$ is a one-dimensional Cohen-Macaulay local ring having minimal multiplicity. 
Then, we have the one-to-one correspondence between $\Tr(R)\setminus \{R\}$ and $\Tr(\Hom_R(\fkm, \fkm))$. As an application, we obtain a sufficient condition for the finiteness of trace ideals. 

\begin{thm} \label{main3}
Let $(R, \fkm)$ be a one-dimensional Cohen-Macaulay local ring having minimal multiplicity. Suppose that $R$ is not a discrete valuation ring. Choose $a\in \fkm$ such that $\fkm^2=a\fkm$ {\rm(}we need not assume that $R/\fkm$ is infinite, see \cite[Corollary 1.10]{Lip}{\rm )}. Set $B=\fkm:\fkm$. Then, the map
\[
\Tr(R)\setminus \{R\} \to \Tr(B);\quad I\mapsto \tfrac{I}{a}
\]
is bijective. 
\end{thm}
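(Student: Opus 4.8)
The plan is to realise the two maps $I\mapsto I/a$ and $\calJ\mapsto a\calJ$ as mutually inverse bijections between lattices of modules, and then to check that each one preserves the property of being a trace ideal.

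First I would record the structure coming from minimal multiplicity. Since $\fkm^2=a\fkm$ and $a$ is a non-zerodivisor, one checks directly that $B=\fkm:\fkm=\frac1a\fkm$, so that $\fkm=aB$ and $R\subsetneq B$ (the strict inclusion uses that $R$ is not a discrete valuation ring, i.e.\ $\fkm\neq (a)$). Consequently multiplication by $a$ is an isomorphism of $R$- and $B$-modules which carries the $B$-submodules of $B$ bijectively onto the $B$-stable $R$-submodules of $\fkm$; in particular $I\mapsto I/a$ and $\calJ\mapsto a\calJ$ are inverse to one another on these lattices. Thus the content of the theorem is that, under this bijection, the trace ideals of $R$ different from $R$ match exactly the trace ideals of $B$. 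Throughout one may dispose of $I=0,\fkm$ and $\calJ=0,B$ by hand, the only nontrivial endpoint being $\fkm\leftrightarrow B$, where $\fkm\in\Tr(R)$ because $R$ is not a discrete valuation ring (Claim \ref{claim1}).

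For the forward direction, let $I\in\Tr(R)\setminus\{R\}$, so $I\subseteq\fkm$. I would first show $I$ is a $B$-module: for $b\in B$ the multiplication map sends $I$ into $bI\subseteq b\fkm\subseteq\fkm\subseteq R$, so $\widehat b|_I\in\Hom_R(I,R)$, and the trace property forces $bI\subseteq I$. Hence $I/a$ is an ideal of $B$. To see it is a trace ideal of $B$, take $\phi\in\Hom_B(I/a,B)$ and define $f\in\Hom_R(I,R)$ by $f(x)=a\,\phi(x/a)$, which lands in $aB=\fkm$; since $I$ is a trace ideal, $\Im f\subseteq I$, that is $\phi(I/a)\subseteq I/a$, as required.

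The reverse direction is the crux. Given $\calJ\in\Tr(B)$, put $M=a\calJ\subseteq\fkm$, a $B$-submodule and an ideal of $R$. The key point is that every $f\in\Hom_R(M,R)$ is automatically $B$-linear: for $c\in\fkm$ and $b\in B$ one has $cb\in\fkm\subseteq R$, whence $c\bigl(f(bm)-bf(m)\bigr)=0$ in $B$, and taking $c=a$ — a non-zerodivisor of $B$ lying in $\fkm$ — gives $f(bm)=bf(m)$. Using this I would next show $\Im f\subseteq\fkm$ for every such $f$: otherwise $\Im f=R$, so $f(m_0)=1$ for some $m_0\in M$, and $B$-linearity would give $b=f(bm_0)\in\Im f\subseteq R$ for all $b\in B$, contradicting $R\subsetneq B$. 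With $\Im f\subseteq\fkm=aB$, the formula $\phi(y)=\frac1a f(ay)$ defines a $B$-linear map $\calJ\to B$; since $\calJ$ is a trace ideal of $B$ its image lies in $\calJ$, which rescales to $\Im f\subseteq a\calJ=M$. Hence $M\in\Tr(R)$, and the two well-defined maps are mutually inverse, proving the bijection.

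The main obstacle I anticipate is exactly the automatic upgrade of $R$-linearity to $B$-linearity for maps out of $M$, together with the companion fact $\Im f\subseteq\fkm$; these two observations are what allow the trace condition to be transported across the extension $R\subseteq B$, and once they are in place the rest of the argument is formal bookkeeping with the scaling by $a$. It will be worth double-checking that no regularity (non-zerodivisor) hypothesis on $I$ or $\calJ$ is needed, since in the non-domain case $R$ may well have nonzero trace ideals consisting of zerodivisors.
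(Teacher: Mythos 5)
Your proposal is correct and follows essentially the same route as the paper: you show a trace ideal $I\neq R$ is a $B$-module so that $I/a$ is an ideal of $B$, and you transfer homomorphisms back and forth by scaling with the non-zerodivisor $a$, exactly as in the paper's well-definedness and surjectivity arguments. The only cosmetic difference is in ruling out $\Im f=R$ for $f\in\Hom_R(aJ,R)$: you deduce it from the automatic $B$-linearity of $f$ (which you establish first via the cancellation trick with $c=a$), whereas the paper first argues that $f(aJ)=R$ would force $aJ$ to have a free summand, hence $aJ=zR$ and $R=B$ --- both are valid and of comparable length.
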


\begin{proof} 
(well-definedness): Let $I\in \Tr(R)\setminus \{R\}$. For all $\alpha\in B=\fkm:\fkm$, we have 
\[
\iota'\circ \hat{\alpha}\circ \iota: I\xrightarrow{\iota} \fkm \xrightarrow{\hat{\alpha}} \fkm \xrightarrow{\iota'} R \in \Hom_R(I, R),
\] 
where $\hat{\alpha}$ denotes the multiplication map by $\alpha$ and $\iota$, $\iota'$ are the inclusions. Hence, $\alpha I= \Im (\iota'\circ \hat{\alpha}\circ \iota)\subseteq I$ since $I$ is a trace ideal in $R$. It follows that $\tfrac{I}{a} B=\tfrac{I}{a}$. On the other hand, since $\fkm^2=a\fkm$, we have $B=\tfrac{\fkm}{a}$. It follows that $\tfrac{I}{a}\subseteq B$. Therefore, $\tfrac{I}{a}$ is an ideal in $B$.

Let $f\in \Hom_B(\tfrac{I}{a}, B)$. $f(I)=f(a\tfrac{I}{a})=af(\tfrac{I}{a})$. Because $f$ is an $R$-linear homomorphism and $I$ is a trace ideal in $R$, $f(I)\subseteq I$. Hence, we obtain that $f(\tfrac{I}{a})=\tfrac{f(I)}{a}\subseteq \tfrac{I}{a}$. It follows that $\tfrac{I}{a}$ is a trace ideal in $B$. 

(injectivity): This is clear.

(surjectivity): Let $J\in \Tr(B)$. We will prove that $aJ\in \Tr(R)\setminus \{R\}$. Let $f\in \Hom_R(aJ, R)$, and define the map 
\[
g: J\to B; \quad x\mapsto \tfrac{f(ax)}{a}
\]
for $x\in J$. 
\begin{claim} \label{claim3}
$g$ is a $B$-linear homomorphism.
\end{claim}
\begin{proof}[Proof of Claim \ref{claim3}] 
Note that $g$ is well-defined. Indeed, if $f(aJ)\not\subseteq \fkm$, then $f(aJ)=R$. This proves that $aJ$ has a free direct summand. In other words, $aJ=(z)+L$ and $(z)\cap L=0$ for some non-zerodivisor $z$ of $R$ and some ideal $L$ in $R$. $L=0$ since $zL\subseteq (z)\cap L=0$. Thus $aJ=zR$. It follows that $zB=aJB=aJ=zR$; hence, $R=B$. This is a contradiction for the assumption that $R$ is not a discrete valuation ring. Hence, $g(x)=\tfrac{f(ax)}{a}\in \tfrac{\fkm}{a}=B$ for all $x\in J$.

We prove that $g$ is a $B$-linear homomorphism. Let $\alpha\in B$ and $x\in J$, and write $\alpha = \tfrac{b}{a}$ for some $b\in \fkm$. Then, we obtain that 
\begin{align*} 
ag(\alpha x) = a\frac{f(a\alpha x)}{a} = \frac{f(a^2\alpha x)}{a} = \frac{f(ab x)}{a} =  b\frac{f(a x)}{a} = bg(x) = a \alpha  g(x).
\end{align*}
By noting that $a$ is a non-zerodivisor of $B$, it follows that $\alpha g(x)=g(\alpha x)$. 
\end{proof}

By Claim \ref{claim3}, $g(J)\subseteq J$ because $J$ is a trace ideal in $B$. It follows that $f(aJ)=ag(J)\subseteq aJ$. Hence, $aJ$ is a trace ideal in $R$.
\end{proof}

\begin{cor} \label{cor4.8}
Suppose that $(R, \fkm)$ is a one-dimensional Cohen-Macaulay local ring having minimal multiplicity. Then $\Tr(R)$ is finite if and only if $\Tr(\fkm:\fkm)$ is finite.
\end{cor}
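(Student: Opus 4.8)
The plan is to read off this equivalence directly from the bijection established in Theorem \ref{main3}. That theorem produces a bijection between $\Tr(R)\setminus\{R\}$ and $\Tr(\fkm:\fkm)$, but only under the standing hypothesis that $R$ is \emph{not} a discrete valuation ring. So the first thing I would do is dispose of the discrete valuation ring case separately, since that is the one situation not covered by Theorem \ref{main3}.

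If $R$ is a discrete valuation ring, then $\fkm=(f)$ for a non-zerodivisor $f$, and every nonzero ideal of $R$ has the form $(f^n)\cong R$, so its trace ideal is all of $R$; hence $\Tr(R)=\{0,R\}$, which is finite. Moreover $\fkm:\fkm=\{x\in\rmQ(R)\mid xf\in(f)\}=R$, so $\Tr(\fkm:\fkm)=\Tr(R)=\{0,R\}$ is finite as well. Thus both sides are finite and the asserted equivalence holds trivially in this case.

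If $R$ is not a discrete valuation ring, then I would simply invoke Theorem \ref{main3} with $B=\fkm:\fkm$; the minimal-multiplicity hypothesis guarantees a choice of $a\in\fkm$ with $\fkm^2=a\fkm$, as recorded there. The resulting bijection $\Tr(R)\setminus\{R\}\to\Tr(\fkm:\fkm)$ shows that $\Tr(R)\setminus\{R\}$ is finite if and only if $\Tr(\fkm:\fkm)$ is finite. Since $R=\tr_R(R)$ always lies in $\Tr(R)$, the set $\Tr(R)$ differs from $\Tr(R)\setminus\{R\}$ by exactly one element, so finiteness of one is equivalent to finiteness of the other, and the equivalence follows.

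There is essentially no obstacle here: the entire content is carried by Theorem \ref{main3}, and the only point requiring any care is the discrete valuation ring case, which Theorem \ref{main3} explicitly excludes and which must therefore be checked by hand. This reduces to the two trivial observations that a discrete valuation ring has only the trace ideals $0$ and $R$, and that $\fkm:\fkm=R$ for such a ring, so that both $\Tr(R)$ and $\Tr(\fkm:\fkm)$ are finite and the biconditional is satisfied vacuously.
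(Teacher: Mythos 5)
Your proof is correct and follows the same route as the paper, which states Corollary \ref{cor4.8} as an immediate consequence of the bijection in Theorem \ref{main3}. Your explicit handling of the discrete valuation ring case (where $\Tr(R)=\{0,R\}$ and $\fkm:\fkm=R$, so both sides are trivially finite) is a welcome bit of care that the paper leaves implicit.
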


\begin{rem}\label{remrem5.3}
Let us explain the meaning of $\fkm:\fkm$ in numerical semigroup rings. Let $e\ge 3$ be an integer and $R=K[[H]]$ a numerical semigroup ring containing $t^e$. Let $\fkm$ be the maximal ideal of $R$. Then, 
\[
\mathrm{Ap}_e(H)=\{h\in H \ \mid \ h-e\not\in H \}
\] 
is called the {\it Ap\'ery set} of $H$ with respect $e$ (\cite[before Lemma 2.4]{RS}). It is easy to obtain that $\mathrm{Ap}_e(H)=\{w_0(H)=0, w_1(H), \dots, w_{e-1}(H)\}$, where $w_i(H)$ is the least integer in $H$ congruent with $i$ modulo $e$. Write $w_i(H)=i+\mu_i(H)e$ for all $1\le i \le e-1$. 
We denote by $\calH_e$ the set of all numerical semigroups containing $e$, and denote by $\calP_e$ the cone
{\small
\begin{align*}
\left\{ (x_1, \dots, x_{e-1})\in \mathbb{R}^{e-1} \ \middle| \ \begin{matrix}
\text{for all $1 \le i \le j \le e-1$ with $i+j\ne e$, either}  \\
x_i + x_j \ge x_{i+j} \ \ (i+j<e) \text{ \ or \ } x_i + x_j \ge x_{i+j}-1 \ \  (i+j>e)
\end{matrix}
\right\}.
\end{align*}
}
With this notation, Kunz (\cite{Kun}) showed that there is a one-to-one correspondence between $\calH_e$ and $\calP_e\cap \mathbb{N}^{e-1}$ as follows:
\begin{align*}
\calH_e \to \calP_e\cap \mathbb{N}^{e-1}; \quad H\mapsto (\mu_1(H), \dots, \mu_{e-1}(H)).
\end{align*}
Furthermore, the interior of $\calP_e\cap \mathbb{N}^{e-1}$ corresponds to the numerical semigroups having minimal multiplicity $e$. With the above notation, if $R=K[[H]]$ corresponds to an interior point, then $\fkm:\fkm$ corresponds to $(\mu_1(H)-1, \dots, \mu_{e-1}(H)-1)$. 

In particular, Corollary \ref{cor4.8} claims that the finiteness of trace ideals in numerical semigroup rings with multiplicity $\le e$ can be reduced the boundary of $\calP_e\cap \mathbb{N}^{e-1}$.
\end{rem}

We further note an application of Theorem \ref{main3} to Arf rings. The notion of Arf rings originates from the classification of certain singular points of plane curves by Arf \cite{A}. Thereafter, Lipman \cite{Lip} generalized them by extracting the essence of the rings. The reader can consult \cite{IK, Lip} for several basic results on Arf rings. The examples arising from numerical semigroup rings are also found by using \cite[Corollary 3.19]{RS}. Here, we only note the characterization of Arf rings by Lipman. Let $R$ be a one-dimensional Cohen-Macaulay local ring. Set 
\[
\displaystyle R_1=\bigcup_{i\ge0}[\operatorname{J}(R)^i:\operatorname{J}(R)^i] \quad 
\text{and define recursively} \quad 
R_n=
\begin{cases}
\  R & \ \ (n=0)\\
\ [R_{n-1}]_1 & \ \ (n>0)\\
\end{cases}
\]
for each $n\ge 0$. Then, we obtain the tower
$$R=R_0\subseteq R_1\subseteq \cdots \subseteq R_n \subseteq \cdots$$
of rings in $\ol{R}$. Every $R_n$ is a Cohen-Macaulay semi-local ring such that all maximal ideals are of height one. 
By using this tower of rings, Lipman gave a characterization of Arf rings as follows.

\begin{fact}[{\cite[Theorem 2.2]{Lip}}]\label{4.9}
The following conditions are equivalent:
\begin{enumerate}[{\rm (a)}]
\item
$R$ is an Arf ring.
\item
$[R_n]_\fkM$ has minimal multiplicity for every $n\ge0$ and $\fkM\in\Max R_n$.
\end{enumerate}
When the equivalent conditions hold, $\overline{R}=\underset{n\ge0}{\bigcup}R_n$.
\end{fact}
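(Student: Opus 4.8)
The plan is to prove the equivalence by isolating a single inductive step governing how the Arf property interacts with the blow-up ring $R_1$, and then iterating along the tower. Throughout I take as working definition (following Lipman) that a one-dimensional Cohen-Macaulay semilocal ring $S$ is an \emph{Arf ring} when every integrally closed open (i.e.\ $J(S)$-primary) ideal of $S$ is \emph{stable}, where $I$ is stable if $I^2=aI$ for some non-zerodivisor $a\in I$; for a local ring this condition on $\fkm$ says precisely that $R$ has minimal multiplicity. The first preliminary step is to record the standard dictionary: $\fkm$ is stable $\iff$ $R$ has minimal multiplicity $\iff$ $R_1=\fkm:\fkm=\fkm/a$, so that the blow-up of $\fkm$ is reached in one stable step (this is exactly the computation used in the set-up of Theorem \ref{main3} and in \cite[Corollary 1.10]{Lip}, noting that under $\fkm^2=a\fkm$ one has $\fkm^i=a^{i-1}\fkm$ and hence $\fkm^i:\fkm^i=\fkm:\fkm$ for all $i$). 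I would also record the elementary fact that $I$ is stable $\iff I(I:I)=I$, so that stability is detected after passing to $I:I$, and that each $R_n$ is a birational extension of $R$ inside $\ol{R}$ which is again semilocal Cohen-Macaulay of dimension one, giving the induction a well-defined arena.

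The heart of the argument is the single-step lemma: \emph{$R$ is Arf if and only if $R$ has minimal multiplicity and every localization $[R_1]_\fkM$ is Arf.} For the forward direction, $\fkm$ is integrally closed, so if $R$ is Arf then $\fkm$ is stable and $R$ has minimal multiplicity; it then remains to show the Arf property descends to $R_1$. Here I would use that the contraction to $R$ of an integrally closed open ideal of $[R_1]_\fkM$ is again integrally closed and open in $R$, hence stable by hypothesis, and that extension back to $R_1$ preserves stability because $R_1$ is a birational extension in which $\fkm$ has become principal ($\fkm R_1=aR_1$). For the converse, given an integrally closed open ideal $I$ of $R$, I would extend it to $IR_1$, show this extension is integrally closed in $R_1$ and therefore stable there (as $R_1$ is Arf at each maximal ideal), and then pull stability back to $R$ using $R_1=\fkm/a$: the minimal multiplicity of $R$ is exactly what makes the passage $I\rightsquigarrow IR_1\rightsquigarrow I$ faithful enough to recover $I^2=bI$ for a suitable reduction $b$.

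Granting the single-step lemma, the equivalence (a) $\iff$ (b) follows by induction along $R=R_0\subseteq R_1\subseteq\cdots$: $R$ is Arf iff $R$ has minimal multiplicity and each $[R_1]_\fkM$ is Arf, iff (applying the lemma to each $[R_1]_\fkM$) $R$ and each $[R_1]_\fkM$ have minimal multiplicity and each $[R_2]_{\fkM'}$ is Arf, and so on; unwinding gives that $[R_n]_\fkM$ has minimal multiplicity for all $n\ge0$ and all $\fkM\in\Max R_n$, which is condition (b). To identify $\ol{R}=\bigcup_{n\ge0}R_n$ under these conditions, set $R_\infty=\bigcup_{n\ge0}R_n$, a birational extension of $R$ contained in $\ol{R}$. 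Since $R_{n+1}=[R_n]_1$ and the blow-up commutes with this filtered union, $[R_\infty]_1=R_\infty$ locally; combined with the (limiting) minimal multiplicity this forces $J([R_\infty]_\fkM)$ to be invertible, so each localization of $R_\infty$ is a discrete valuation ring and $R_\infty$ is integrally closed, whence $R_\infty=\ol{R}$. (When $\ol{R}$ is module-finite over $R$, as in the complete or analytically unramified setting of this paper, the chain stabilizes at a finite stage $R_N=\ol{R}$, and the limiting argument is unnecessary.)

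The main obstacle is the single-step lemma, and specifically the two transfer statements it hides: that the Arf property \emph{descends} from $R$ to the localizations of its blow-up $R_1$, and that stability of $IR_1$ in $R_1$ can be \emph{pulled back} to stability of $I$ in $R$. Both require controlling the correspondence between integrally closed open ideals of $R$ and of $R_1$ under contraction and extension, and verifying that this correspondence respects the condition $I(I:I)=I$. The delicate point is that extension need not commute with integral closure in general, so one must exploit the minimal multiplicity hypothesis ($\fkm R_1=aR_1$) to force the requisite compatibility; everything else reduces to bookkeeping with reductions, lengths, and the behaviour of $\ol{R}$ over $R$.
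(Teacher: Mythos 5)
First, note that the paper does not prove this statement at all: it is quoted as a Fact with citation to Lipman \cite[Theorem 2.2]{Lip}, so the only meaningful comparison is with Lipman's original argument. Your overall architecture --- a single-step lemma relating the Arf property of $R$ to minimal multiplicity of $R$ plus the Arf property of the localizations of $R_1$, followed by induction along the tower --- is indeed the right skeleton and matches Lipman's strategy. But both transfer statements inside your single-step lemma are wrong or unproven as written. In the forward direction you contract an integrally closed open ideal $J$ of $[R_1]_\fkM$ to $R$, invoke stability there, and ``extend back''; this fails because the composite (contract, then extend) need not return $J$, so stability of $J\cap R$ says nothing about $J$. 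The correct and standard trick goes the other way: since $\fkm^2=a\fkm$ gives $aR_1=\fkm$, the ideal $aJ$ already lies in $R$, is open and regular, and is integrally closed in $R$ (any $z\in R$ integral over $aJ$ is integral over the $R_1$-ideal $aJR_1=aJ$, whose closure is $a\ol{J}^{R_1}=aJ$); Arf-ness of $R$ then gives $(aJ)^2=b(aJ)$ with $b=aj_0$, $j_0\in J$, whence $J^2=j_0J$ directly. In the converse direction you assert that $IR_1$ ``is integrally closed in $R_1$'' --- this is false in general, since extension does not preserve integral closedness; the hypothesis only hands you stability of $\ol{IR_1}$, and the mechanism for pulling stability back down to $I$ (the actual crux, Lipman's lemma preceding his Theorem 2.2) is never supplied. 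You only say that minimal multiplicity ``is exactly what makes the passage faithful enough,'' which names the difficulty without resolving it. The missing ingredients are concrete: for a minimal reduction $x$ of the integrally closed $I$ one has $I=\ol{(x)}=x\ol{R}\cap R$, the contraction identity $IR_1\cap R\subseteq\ol{I}=I$ (determinant trick, using that $R_1$ is a finite faithful $R$-module), the fact that a stable ideal satisfies $I^2=xI$ for \emph{every} minimal reduction $x$, and a length induction showing the transforms $IR_n$ eventually become invertible so the descent can start. (Minimal reductions without infinite residue fields are fine via \cite[Corollary 1.10]{Lip}, as you note.)

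The final claim $\ol{R}=\bigcup_{n\ge0}R_n$ is also only half-handled. Your parenthetical argument in the module-finite case is correct and suffices for this paper's application (Corollary \ref{cor4.11}): the chain $R\subseteq R_1\subseteq\cdots\subseteq\ol{R}$ stabilizes at some $R_N$ with $[R_N]_1=R_N$, and then stability of the maximal ideal $\fkM[R_N]_\fkM$ together with $\fkM[R_N]_\fkM:\fkM[R_N]_\fkM=[R_N]_\fkM$ forces that maximal ideal to be principal, so every localization is a discrete valuation ring and $R_N=\ol{R}$. But your limiting argument for the general case --- ``blow-up commutes with the filtered union,'' so the ``limiting minimal multiplicity'' makes $J([R_\infty]_\fkM)$ invertible --- is not a proof: $R_\infty$ is a directed union that you have not shown to be Noetherian, minimal multiplicity is a statement about each $[R_n]_\fkM$ and does not automatically pass to the colimit, and no argument is given that an arbitrary $x\in\ol{R}$ (equivalently, the module-finite extension $R[x]$) lands in some $R_n$, which is what Lipman actually establishes using the Arf property. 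In short: right scaffolding, but the two load-bearing lemmas and the unconditional form of the last claim are precisely the points left open.
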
 

By using this characterization, we obtain an application to Arf rings.

\begin{cor} \label{cor4.11}
Let $(R, \fkm)$ be an Arf local ring. Suppose that $\ol{R}$ is finitely generated as an $R$-module and a local ring. Then, $\Tr(R)$ is finite.
\end{cor}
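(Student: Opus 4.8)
The plan is to run Corollary \ref{cor4.8} repeatedly along Lipman's tower $R=R_0\subseteq R_1\subseteq\cdots$ from Fact \ref{4.9}, thereby reducing the finiteness of $\Tr(R)$, step by step, to the finiteness of $\Tr(\ol R)$; the latter is trivial because $\ol R$ will turn out to be a discrete valuation ring. Since $R$ is Arf it is one-dimensional and Cohen-Macaulay, and Fact \ref{4.9}(b) with $n=0$ shows $R$ has minimal multiplicity, so Corollary \ref{cor4.8} applies to $R$ itself; the whole point is to see that it keeps applying all the way up the tower.

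First I would check that every $R_n$ is a one-dimensional Cohen-Macaulay local ring with minimal multiplicity. Each $R_n$ is Cohen-Macaulay semi-local of dimension one with all maximal ideals of height one, and Fact \ref{4.9}(b) gives that each localization $[R_n]_\fkM$ has minimal multiplicity. The key observation is that each $R_n$ is in fact \emph{local}: the extension $R_n\subseteq\ol R$ is integral, so by lying-over every maximal ideal of $R_n$ is the contraction of a maximal ideal of $\ol R$, and since $\ol R$ is local by hypothesis with maximal ideal $\fkn$, the ring $R_n$ has the single maximal ideal $\fkn\cap R_n$. Thus $[R_n]_\fkM=R_n$ has minimal multiplicity. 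Next I would identify the tower with the iteration of $\fkm:\fkm$: for a one-dimensional Cohen-Macaulay local ring $S$ of minimal multiplicity one may choose a non-zerodivisor $a\in\fkm_S$ with $\fkm_S^2=a\fkm_S$ (by \cite[Corollary 1.10]{Lip}), whence $\fkm_S^i=a^{i-1}\fkm_S$ and $[\fkm_S^i:\fkm_S^i]=[\fkm_S:\fkm_S]$ for all $i\ge1$, so that $[S]_1=\bigcup_{i\ge0}[\fkm_S^i:\fkm_S^i]=\fkm_S:\fkm_S$. Applying this to $S=R_n$ gives $\fkm_{R_n}:\fkm_{R_n}=R_{n+1}$, and therefore Corollary \ref{cor4.8} yields that $\Tr(R_n)$ is finite if and only if $\Tr(R_{n+1})$ is finite.

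Finally I would close the loop. Because $\ol R$ is module-finite over $R$ it is a Noetherian $R$-module, so the ascending chain $R_0\subseteq R_1\subseteq\cdots$ of $R$-submodules stabilizes; since $\ol R=\bigcup_{n\ge0}R_n$ by Fact \ref{4.9}, we get $R_N=\ol R$ for some $N$. The hypothesis that $\ol R$ is module-finite forces $R$ to be reduced, so $\ol R\subseteq\rmQ(R)$ is reduced; being also one-dimensional, Noetherian, integrally closed, and local, $\ol R$ is a discrete valuation ring. In a discrete valuation ring every nonzero proper ideal is free of rank one, so its trace equals the whole ring, giving $\Tr(\ol R)=\{0,\ol R\}$, which is finite. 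Chaining the equivalences of the previous paragraph, $\Tr(R)=\Tr(R_0)$ is finite if and only if $\Tr(R_N)=\Tr(\ol R)$ is finite, and the latter holds.

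I expect the main obstacle to be the bookkeeping that keeps Corollary \ref{cor4.8} applicable at each stage: precisely, verifying that every $R_n$ is genuinely local (this is exactly where the hypothesis that $\ol R$ is local enters) and that the fractional ideal $\fkm_{R_n}:\fkm_{R_n}$ coincides with the next ring $R_{n+1}$ in Lipman's tower rather than merely sitting between $R_n$ and $\ol R$. Once these two identifications are in place, the finiteness of $\Tr(R)$ propagates down from the discrete valuation ring $\ol R$ without further difficulty.
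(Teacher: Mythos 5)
Your proof is correct and follows essentially the same route as the paper: iterate Corollary \ref{cor4.8} along Lipman's tower, using Fact \ref{4.9} and the locality of $\ol{R}$ to keep each $R_n$ a local ring of minimal multiplicity, until the tower stabilizes at the discrete valuation ring $\ol{R}$. The paper's proof is just a one-line version of this; you have usefully filled in the implicit identifications $R_{n+1}=\fkm_{R_n}:\fkm_{R_n}$ and the locality of each $R_n$ via lying over.
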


\begin{proof} 
By noting that all $R_n$ are local rings having minimal multiplicity and $\ol{R}/R$ is of finite length, the assertion follows from Corollary \ref{cor4.8} and Fact \ref{4.9}.
\end{proof}

\section{A question}\label{section5}

At the end of this paper, we note that the following question is left open.

\begin{quest}
Let $(R, \fkm)$ be an Noetherian local ring of dimension zero. Then, what is the property of rings having finite trace ideals?
\end{quest}

We only have the answer for Gorenstein rings (or rings having minimal multiplicity, see Theorem \ref{aa3.3}):

\begin{prop}
Let $K$ be an infinite field, and let $(R, \fkm)$ be a local $K$-algebra of dimension zero. Suppose that $R$ is Gorenstein, i.e., self-injective. Then $\Tr(R)$ is finite if and only if $R\cong S/\fkn^\ell$ for some discrete valuation ring $(S, \fkn)$ and some integer $\ell> 0$. 
\end{prop}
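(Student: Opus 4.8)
The plan is to reduce the statement to a purely ring-theoretic counting problem using self-injectivity. Since $R$ is self-injective, every $R$-linear map $f\colon I\to R$ from an ideal $I$ into $R$ extends to an endomorphism of $R$, hence is multiplication by some $r\in R$; therefore $\Im f=rI\subseteq I$, and combined with $I\subseteq\tr_R(I)$ this gives $\tr_R(I)=I$. By Fact \ref{remtrace}(a) this means that \emph{every} ideal of $R$ is a trace ideal, so $\Tr(R)$ coincides with the full set of ideals of $R$. (This is exactly where the Gorenstein hypothesis is used.) Consequently $\Tr(R)$ is finite if and only if $R$ has only finitely many ideals, and it remains to prove that, for a zero-dimensional local $K$-algebra with $K$ infinite, having finitely many ideals is equivalent to being of the form $S/\fkn^\ell$ with $(S,\fkn)$ a discrete valuation ring.

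The direction $(\Leftarrow)$ is immediate: if $R\cong S/\fkn^\ell$ then $R$ is a chain ring whose ideals are precisely the images of $\fkn^0\supseteq\fkn^1\supseteq\cdots\supseteq\fkn^\ell$, so $R$ has exactly $\ell+1$ ideals; since $\Tr(R)$ is always contained in the set of ideals, it is finite. One checks in passing that $S/\fkn^\ell$ is Gorenstein, its socle $\fkn^{\ell-1}/\fkn^\ell$ being one-dimensional, consistent with the hypothesis.

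For $(\Rightarrow)$ I would argue by controlling the embedding dimension. Set $k=R/\fkm$; since $R$ is a $K$-algebra and $K$ is infinite, $k$ is an infinite field. The ideals of $R$ lying between $\fkm^2$ and $\fkm$ correspond bijectively to the $k$-subspaces of $\fkm/\fkm^2$, because $\fkm$ annihilates this quotient. If $\dim_k\fkm/\fkm^2\ge 2$, then a two-dimensional subspace already contains infinitely many lines over the infinite field $k$, producing infinitely many ideals and contradicting finiteness. Hence $\dim_k\fkm/\fkm^2\le 1$, i.e.\ $\fkm$ is principal (possibly zero). Now $R$ is Artinian, hence complete, and equicharacteristic since it contains $K$, so the Cohen structure theorem supplies a coefficient field $k\hookrightarrow R$ and a surjection $k[[x]]\twoheadrightarrow R$. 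As $k[[x]]$ is a DVR and $R$ is a nonzero Artinian quotient, the kernel is $(x^\ell)$ for some $\ell\ge 1$; taking $S=k[[x]]$ and $\fkn=(x)$ yields $R\cong S/\fkn^\ell$. The case $\fkm=0$ (so $R$ is a field) is the subcase $\ell=1$.

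I expect the main obstacle to be the structural identification in $(\Rightarrow)$. The subspace-counting step is precisely where the hypothesis that $K$ is infinite is indispensable (over a finite residue field a non-chain Gorenstein ring with finitely many ideals could a priori occur), and the passage from ``embedding dimension $\le 1$'' to the explicit isomorphism type $S/\fkn^\ell$ requires the Cohen structure theorem together with the existence of a coefficient field in $R$. Everything else—the reduction $\Tr(R)=\{\text{ideals of }R\}$ via self-injectivity and the easy implication—is routine once self-injectivity has been exploited.
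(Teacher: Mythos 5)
Your proof is correct and follows essentially the same route as the paper: both arguments rest on the fact that every ideal of an Artinian Gorenstein ring is a trace ideal (which you prove directly from self-injectivity where the paper cites Lindo--Pande), and both then use the infinitude of the field to produce infinitely many ideals once $\fkm$ requires two generators --- you count lines in $\fkm/\fkm^2$, while the paper exhibits the pairwise distinct principal ideals $(x+ay)$, $a\in K$, which is the same count in disguise. Your Cohen-structure-theorem finish merely fills in a step the paper leaves implicit.
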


\begin{proof}
If $R\cong S/\fkn^\ell$, then there are only finite number of ideals in $R$. In particular, $\Tr(R)$ is finite. Suppose that $\Tr(R)$ is finite. Assume that $\fkm$ is not cyclic, and choose $x, y\in \fkm$ such that $x, y$ is a part of minimal generators of $\fkm$. Then $(x+ay)=(x+by)$ if and only if $a=b$ for $a, b\in K$. By recalling that all ideals are trace ideals in Artinian Gorenstein ring (\cite[Theorem 3.5]{LP}), this proves that $\Tr(R)$ is infinite. Hence, $\fkm$ is cyclic. Thus $R\cong S/\fkn^\ell$ for some discrete valuation ring $(S, \fkn)$ and some integer $\ell> 0$. 
\end{proof}




\end{document}